\theoremstyle{plain}
\newtheorem{df}{Definition}
\newtheorem{proposition}{Proposition}
\newtheorem{theorem}{Theorem}
\newtheorem{lemma}{Lemma}
\newtheorem{fact}[df]{Fact}
\newtheorem{corollary}[df]{Corollary}
\newtheorem{remark}{Remark}
\def\tk{\tilde{k}}
\renewcommand{\thefootnote}{\fnsymbol{footnote}}
\newcommand{\bea}{\begin{eqnarray}}
\newcommand{\eea}{\end{eqnarray}}
\newcommand{\be}{\begin{equation}}
\newcommand{\ee}{\end{equation}}
\newcommand{\beas}{\begin{eqnarray*}}
\newcommand{\eeas}{\end{eqnarray*}}
\newcommand{\bc}{\begin{center}}
\newcommand{\ec}{\end{center}}
\def\dag{{\mathrm{DAG}}}
\def\udag{{\mathrm{UDAG}}}
\def\Sy{ \mathbb{S}}
\def\R{ {\mathbb{R}}}
\def\intersect{\bigcap}
\def\union{\bigcup}
\def\lintersect{\cap}
\def\lunion{\cup}
\def\parsec{\par\noindent}
\def\med{\medskip\parsec}
\def\eod{\vrule height 6pt width 5pt depth 0pt}
\def\qed{\hfill$\eod$}
\def\E{ {\mathbb{E}}}
\newcommand{\fa}{{\mathfrak{A}}}
\newcommand{\fb}{{\mathfrak{B}}}
\newcommand{\G}{\mathbb{G}}
\newcommand{\PA}{\mathcal{PA}}
\newcommand{\dddeg}{{\underline{\underline{\deg}}}}
\newcommand{\ddeg}{{\underline{\deg}}}
\newcommand{\dd}{{\underline{d}}}
\newcommand{\ddd}{{\underline{\underline{d}}}}
\newcommand{\Binomial}{\mathrm{Binomial}}
\def\whp{{\textrm{whp}}}
\def\Pr{{P}}
\newcommand{\Aut}{\mathrm{Aut}}
\newcommand{\distrib}{\sim}
\newcommand{\Adm}{\mathrm{Adm}}
\newcommand{\Gr}{\mathcal{G}}
\begin{document}
\begin{center}
{\LARGE {\bf Asymmetry and Structural Information in Preferential Attachment Graphs}}\footnotemark[1]
\medskip
\parsec
\today

\medskip\bigskip
\begin{tabular}{lll}
Tomasz \L{}uczak&Abram Magner&Wojciech Szpankowski\\
Faculty of Math. \& Comp. Sci.&Center for the Science of Information&Dept.Computer Science\\
Adam Mickiewicz University&Purdue University&Purdue University\\
61-614 Pozna\'n&W. Lafayette, IN 47907&W. Lafayette, IN 47907\\
Poland&U.S.A.&U.S.A.\\
{\tt tomasz@amu.edu.pl}& {\tt abram10@gmail.com}& {\tt spa@cs.purdue.edu}
\end{tabular}

\footnotetext[1]{
This work was supported by
NSF Center for Science of Information (CSoI) Grant CCF-0939370, 
and in addition by NSF Grants CCF-1524312,  and
NIH Grant 1U01CA198941-01, and NCN grants 2012/06/A/ST1/00261 and 2013/09/B/ST6/02258.}

\end{center}
\med

\begin{abstract}

Graph symmetries intervene in diverse applications, 
from enumeration, to graph structure compression, to 
the discovery of graph dynamics (e.g., node arrival order inference). 
Whereas Erd\H{o}s-R\'enyi graphs are typically asymmetric,
real networks are highly symmetric.  So a natural question 
is whether preferential
attachment graphs, where in each step a new node with $m$ edges is added,
exhibit any symmetry.  
In recent work it was proved that preferential attachment graphs
are symmetric for $m=1$, and there is some 
non-negligible probability of symmetry for $m=2$. 
It was conjectured that these graphs are asymmetric when $m \geq 3$.  
We settle this conjecture
in the affirmative, then use it to estimate the 
structural entropy of the model.  To do this, 
we also give bounds
on the number of ways that
the given graph structure could have arisen by preferential
attachment.  These
results have further implications for information theoretic 
problems of interest on preferential attachment 
graphs.

\end{abstract}
\med\med
{\bf Index Terms}: 
Preferential attachment graphs, entropy, graph automorphism, symmetry,
degree distribution.

\newpage
\renewcommand{\thefootnote}{\arabic{footnote}}
\setcounter{footnote}{0}

\section{Introduction}

Study of the asymptotic behavior of the symmetries of graphs,
originally motivated by enumerative combinatorial problems, has recently
found diverse applications in problems ranging from graph compression to 
discovering interesting motifs to understanding dynamics of growing graphs.

Let us explore some of these applications in more detail. 
The basic problem of structural (unlabeled graph) compression can be formulated as
follows: given a probability distribution
on labeled graphs, determine a binary encoding of samples from
the induced 
unlabeled graph distribution so as to minimize expected description length.
In \cite{choiszpa2009} the authors studied this problem in the setting of
Erd\H{o}s-R\'{e}nyi graphs. They showed that, under any distribution
giving equal probability to isomorphic graphs,
the entropy of the induced
distribution on graph structures (i.e., isomorphism classes of graphs) is less
than the entropy of the original distribution by an amount proportional to the
expected logarithm of the number of automorphisms.  
Thus the solution to the above problem is intimately connected with the 
symmetries of the random graph model under consideration.
We explore this topic in some detail in Lemma~\ref{StructuralEntropyProposition} 
of Section~\ref{sec-main}.

We mention also a few representative algorithmic motivations for the study of graph symmetries.
The first involves the problem of \emph{motif discovery}: given a graph $G$ and
a pattern graph $H$, the problem is to find all subgraphs of $G$ that
are isomorphic to $H$.  It has been observed (see, e.g., \cite{picard}) that taking
into account the symmetries of $H$ can significantly decrease the time and space complexity
of the task.  The same is true for $G$ if it has nontrivial symmetries.  

In the area of Markov chains, the paper \cite{diaconis} studies the following problem:
given a graph $G$, the task is to assign weights to edges of $G$ so as to minimize the mixing
time of the resulting Markov chain.  The authors show that symmetries in $G$ may be exploited
to significantly reduce the size of a semidefinite program formulation which solves the problem.
Moreover, they point out several references to the literature in which symmetry plays a key role
in reducing complexity for various problems.

Study of symmetries is further motivated by their connection to 
various
measures of information contained in a graph structure.  For 
instance,
the \emph{topological entropy} of a graph, studied in 
\cite{rashevsky1955}
and \cite{trucco1956}, is a function on graphs that measures the 
uncertainty in the orbit class (i.e., the
set of nodes having the same long-term neighborhood structure) of 
a node
chosen uniformly at random from the node set of the graph.  Note 
that, unlike the labeled and unlabeled graph entropies that we 
study throughout this paper, the topological entropy is a 
function of a particular graph, rather than of a probability 
distribution on graphs.  If the graph
is asymmetric, then the topological entropy is
maximized: if $n$ is the size of the graph, then the topological 
entropy
is, to leading order, $\log n$.  In general, if the symmetries of 
the
graph can be characterized precisely, then so can the topological 
entropy.

The present paper is a step in the direction of understanding symmetries of complex networks
and toward extending graph structure compression results to random 
graph models other than Erd\H{o}s-R\'enyi. In particular, many
real-world graphs exhibit a power-law degree distribution (see \cite{durrett2006}).
A commonly studied model for real-world networks is the 
\emph{preferential attachment} mechanism introduced
in \cite{barabasialbert2002}, in which a graph is built
one vertex at a time, and each new vertex $t$ makes $m$ choices of neighbors in the current graph,
where it attaches to a given old vertex		
$v$ with probability proportional to the current degree of $v$.
We study here a simple variant of the preferential attachment model 
(see \cite{durrett2006} and the conclusion section for other models),
and in the conclusion of this paper 
we suggest that the symmetry behavior of other preferential models
can be studied using the approach developed here.
Our main result is the following: for the variant of the preferential 
attachment model under consideration, when each vertex added to the graph 
chooses a fixed number $m \geq 3$ neighbors, with high probability, there are no nontrivial
symmetries.  This is in stark contrast to the 
many symmetries observed in real-world networks 
\cite{macarthur2006}.  As we remark below our statement of
Theorem~\ref{th-main}, the asymmetry threshold (as well as the 
degree sequence power law exponent) changes with
other parameters in variants of preferential attachment, such 
as affine and nonlinear models, which may explain this 
discrepancy.

The problem of establishing asymmetry in preferential
attachment graphs appears to be difficult, and
literature on it seems to be scarce.
We are aware only of \cite{jms14}, which
proved that such graphs are {\it symmetric} for $m=1$ and (with asymptotically positive probability) also for $m=2$. 
The authors of \cite{jms14}
conjectured that preferential attachment graphs are indeed asymmetric for
$m\geq 3$. 
In this paper we first settle this conjecture in the affirmative using  different
methods than the one applied in \cite{kimvusudakov2002} and \cite{jms14}.
Namely, instead of relying on the graph defect (a measure of asymmetry defined in \cite{kimvusudakov2002} which is necessarily bounded in
preferential attachment graphs, and hence has poor concentration properties),
we shall observe that symmetry would 
imply that certain vertices make the same choices with regard to an 
initial set of vertices uniquely identifiable by their degrees, which we 
prove is unlikely to happen for		
preferential attachment graphs whenever $m \geq 3$.

After settling the asymmetry question for preferential attachment graphs, we use
it to address the issue of structural entropy. We first review 
an estimate of
the labeled graph entropy given in \cite{sauerhoff},
and then estimate the unlabeled graph entropy (also known as the structural entropy). In 
Lemma~\ref{StructuralEntropyProposition} we
relate both entropies. Then, using 
our asymmetry result from Theorem~\ref{th-main}, we
estimate the structural entropy.  To derive the structural entropy estimate,
we study the characteristics of the directed, acyclic graph version of
the preferential attachment process (culminating in 
Proposition~\ref{FatDAGProposition}, which may be of independent interest).  In particular, we estimate the number of ways that
a given graph could have arisen according to the preferential
attachment mechanism.  We additionally estimate the typical \emph{height} (i.e., the length of the longest directed
path) of this directed version of the graph, which, being a natural structural quantity, may be of independent interest.  

We emphasize that the labeled and unlabeled graph entropies that
we study are fundamental, as they give the minimum achievable 
expected length of any prefix source code (i.e., compression code) for 
these graphs.

Now we review some of the literature on symmetries of random graphs.
The study of the asymptotic behavior of the automorphism group of a random graph
started with a paper of  Erd\H{o}s and R\'{e}nyi~\cite{erdosrenyi1963}, where
they showed that $\G(n,M)$ (i.e., the uniformly random graph on $n$ vertices with $M$ edges) with
constant density (i.e. when $M=\Theta (n^2)$) is asymmetric with high probability,
a result motivated by the combinatorial question of determining the asymptotics
of the number of unlabeled graphs on $n$ vertices for $n \to \infty$. Then Wright~\cite{wright1974} 
proved that $\G(n,M)$ whp becomes asymmetric as soon as 
the number of isolated vertices in it drops under $1$. His result was later strengthened 
by Bollob\'as~\cite{bollobas1982Dist}, who also proved asymmetry  for $r$-regular graphs with $r\ge 3$. The asymptotic size of the automorphism group of $\G(n,M)$ for small $M$,  
where the graph is not connected, was given  by {\L}uczak~\cite{luczak1988}. 
As a similar question motivated the investigation of symmetry properties of random
regular graphs, Bollob\'as improved his result from~\cite{bollobas1982Dist}  by showing in~\cite{bollobas1982} 
that unlabelled  regular graphs with degree $r\ge 3$ 
are whp asymmetric as well. Let us note that it is a substantially stronger theorem (see the discussion below 
after Theorem~\ref{th-main}).

For general models, asymmetry results can be nontrivial to prove, due in
part to the fact that asymmetry is a global property.  Furthermore, the
particular models considered here present difficulties not seen in
the Erd\H{o}s-R\'{e}nyi case: there is significant dependence between edge
events, and graph sparseness makes derivation of concentration results
difficult. 
However, settling the symmetry/asymmetry question opens the door to several
other lines of investigation, including, e.g., the design of optimal 
structural compression schemes and the
precise characterization of the limits of inference problems (see, for 
example, \cite{nodeage}) for preferential attachment graphs.
These applications crucially depend on our precise 
understanding of graph symmetry.

The paper is organized as follows. In Section~\ref{sec-main} we
present our main results regarding the graph asymmetry and structural entropy.
In Section~\ref{DegreeSequenceSection}, we state and prove several results
on the degree sequence which will be useful in subsequent proofs.
We prove the graph asymmetry result in Section~\ref{sec-proof1} and the entropy 
result, along with the necessary structural results on the directed version of the model, in Section 
~\ref{StructuralEntropyTheoremProofSection}.

\section{Main Results}
\label{sec-main}

In this section, we state the main problem, introduce the model that we
consider, and formulate the main results.
First, we review some standard graph-theoretic terminology and notation. 

We start with the notion of structure-preserving
transformations between labeled graphs: given two graphs (possibly with multiple edges between nodes)
$G_1$ and $G_2$ with
vertex sets $V(G_1)$ and $V(G_2)$, a mapping $\phi:V(G_1)\to V(G_2)$ is said
to be an \emph{isomorphism} if it is bijective and preserves edge relations;
that is, for any $x, y\in V(G_1)$, the number of edges (possibly $0$) between $x$ and $y$ is equal to the
number of edges in $G_2$ between $\phi(x)$ and $\phi(y)$.  When
such a $\phi$ exists, $G_1$ and $G_2$ are said to be isomorphic.

An isomorphism from a graph $G$ to itself is called an
\emph{automorphism} or \emph{symmetry} of $G$.  The set of automorphisms of $G$,
together with the operation of function composition, forms a group, which
is called the \emph{automorphism group} of $G$, denoted by $\Aut(G)$. 
We then say that $G$ is \emph{symmetric} if it has at least one {\it nontrivial
symmetry} and that $G$ is \emph{asymmetric} if the only symmetry of $G$ is
the identity permutation.

Our first main goal is to answer, for $G$ distributed according to a 
\emph{preferential attachment} model (defined below),
the question of whether with high probability 
the automorphism group is trivial (i.e., $|\Aut(G)|=1$) or not.

We say that a multigraph $G$ on vertex set $[n]=\{1,2,\dots, n\}$ 
is {\em $m$-left regular} 
if the only loop of $G$ is at the vertex $1$, and each vertex $v$, 
$2\le v\le n$, has precisely
$m$ neighbors in the set $[v-1]$. We will study a variant of the 
{\em preferential attachment model} 
by giving a probability 
measure $\PA(m; n)$ on 
the set of all $m$-left regular graphs on $n$ vertices.  
More precisely, for an integer  parameter $m\geq 1$ we define the graph $\PA(m; n)$ with vertex set 
$[n]=\{1,2,\dots,n\}$ using recursion on $n$  in the following way: 
the graph $G_1 \distrib \PA(m;1)$ is a single node with label $1$ with $m$ self-edges (these will be
the only self-edges in the graph, and we will only count each such edge once in the
degree of vertex $1$).
Inductively, to obtain a graph $G_{n+1} \distrib \PA(m;n+1)$ from $G_{n}$,
we add vertex $n+1$ and make $m$ random choices $v_1, ..., v_m$ of neighbors in $G_n$ as follows:
for each vertex $w \leq n$ (i.e., vertices in $G_n$),
\begin{align*}
	P( v_i = w  | G_n, v_1, ..., v_{i-1})
    = \frac{ \deg_n(w) }{ 2mn },
\end{align*}
where throughout the paper we denote by $\deg_n(w)$ the degree of 
vertex $w \in [n]$ in the graph $G_n$ (in other words, the degree 
of $w$ after vertex $n$ has made all of its choices).

As stated above, the model so defined is a variant of preferential 
attachment, which refers to a class of similar dynamic random 
graph models in which new vertices choose $m$ neighbors with 
probability proportional to their current degrees.  The 
introduction of this notion is often credited to
\cite{barabasialbert2002}.  Several different tweaks have been
considered: e.g., whether or not self-loops are allowed, whether
or not degrees are updated after each choice made by a given 
vertex, etc.  Our results are robust to such tweaks (in 
particular, all of our main theorems hold as stated, regardless
of the variations mentioned above).  The proofs are similarly 
robust; however, the proof of our
structural entropy result becomes somewhat more involved if,
for example, multiple edges are replaced with single ones 
(yielding a simple graph).  For this reason, we focus on the
model defined above.

We next formulate our first main result regarding asymmetry of 
$\PA(m;n)$ for $m\geq 3$
that we prove in Section~\ref{sec-proof1}.

\begin{theorem}[Asymmetry for preferential attachment model]
\label{th-main}
    Let $G\distrib \PA(m; n)$ for fixed $m \geq 3$.  Then, with high probability as $n\to\infty$,
    \begin{align*}
        |\Aut(G)| = 1.
    \end{align*}
	More precisely,  for $m \geq 3$,
	\begin{align}
	\label{eq-main}
		P(|\Aut(G)| > 1) 
        = O(n^{-\delta}),
	\end{align}
    for some fixed $\delta > 0$ and large $n$.
\end{theorem}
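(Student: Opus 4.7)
The plan is to show that, with high probability, an initial segment $[k]$ of $G_n$ is fixed pointwise by every automorphism, and then to bound the probability of the coincidence between two later vertices' attachment choices that any nontrivial automorphism would then force.

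First, I would establish a rigidity lemma: with probability at least $1 - O(n^{-\delta})$ for some $\delta>0$, every automorphism of $G_n$ fixes each vertex of $[k]$, where $k = n^{\epsilon}$ for a small $\epsilon>0$ to be chosen later. To prove this I would use the P\'olya-urn/product-martingale representation of preferential-attachment degrees, combined with a local-limit-type bound $P(\deg_n(i)=d)=O(\sqrt{i/n})$ uniform in $d$, to show that no vertex outside $[k]$ has a degree matching any vertex in $[k]$ and that vertices within $[k]$ have pairwise distinct degrees; these together force any automorphism to fix each $i\in[k]$. Call the intersection of these events $A_k$. Now suppose $\pi$ is a nontrivial automorphism and let $u>k$ be its smallest moved vertex, with $v:=\pi(u)>u$. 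Because $\pi$ fixes $[u-1]$ pointwise, each edge $(u,c)$ with $c<u$ maps to $(v,c)$; since $c<u<v$, this edge can only arise as one of $v$'s own $m$ choices, and hence the multiset of $v$'s choices in $[v-1]$ equals that of $u$'s choices in $[u-1]$. Calling this event $E_{u,v}$, a union bound gives
\begin{align*}
P(|\Aut(G_n)|>1) \le P(A_k^c) + \sum_{k<u<v\le n} P(E_{u,v}).
\end{align*}

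To bound $P(E_{u,v})$, write $u_1,\ldots,u_m$ for $u$'s choices, condition on $\mathcal{F}_{v-1}$, and use that $v$'s choices are conditionally i.i.d.\ to obtain $P(v\text{ reproduces multiset }M \mid \mathcal{F}_{v-1}) \le m!\prod_{c\in M}\bigl(\deg_{v-1}(c)/(2m(v-1))\bigr)^{\mu_M(c)}$. Taking expectations yields
\begin{align*}
P(E_{u,v}) \le \frac{m!}{(2m(v-1))^m}\,\E\Bigl[\prod_{i=1}^m \deg_{v-1}(u_i)\Bigr].
\end{align*}
Conditioning further on $\mathcal{F}_u$ and applying H\"older's inequality with the moment estimate $\E[\deg_{v-1}(c)^m \mid \mathcal{F}_u]^{1/m}=O(\deg_u(c)\sqrt{v/u})$ (which follows from the product-martingale representation) reduces the right-hand side to $O\bigl((v/u)^{m/2}\,\E[\prod_i \deg_u(u_i)]/(v-1)^m\bigr)$; evaluating the inner expectation using the conditional-i.i.d.\ structure of $u$'s choices and the standard estimate $\sum_{c=1}^{u-1} \deg_{u-1}(c)^2 = O(u\log u)$ gives
\begin{align*}
P(E_{u,v}) = O\!\left(\frac{(\log u)^m}{(uv)^{m/2}}\right).
\end{align*}

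Finally, for $m\ge 3$,
\begin{align*}
\sum_{k<u<v\le n} P(E_{u,v}) = O\!\left(\sum_{u>k}\frac{(\log u)^m}{u^{m-1}}\right) = O\!\left(\frac{(\log n)^m}{k^{m-2}}\right) = O\!\left(n^{-\epsilon(m-2)}(\log n)^m\right),
\end{align*}
and balancing this against $P(A_k^c)$ via a suitable choice of $\epsilon$ yields the bound in \eqref{eq-main}. The principal obstacle is the moment control in the estimate of $P(E_{u,v})$: the variables $\deg_{v-1}(u_i)$ are jointly correlated both through the common choice law of $u$ and through correlated PA growth from time $u$ to $v-1$, so the H\"older decoupling and the bookkeeping of multi-edge multiplicities in the sum over multisets must be carried out with care.
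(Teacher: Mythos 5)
Your proposal follows the same two-part strategy the paper uses: (i) show that the $k$ earliest vertices (you take $k=n^{\epsilon}$; the paper takes $k=n^{0.01}$) have pairwise distinct degrees and degrees distinct from all later vertices, so every automorphism fixes $[k]$ pointwise; (ii) if $\pi$ is nontrivial with smallest moved vertex $u$ and $v=\pi(u)>u$, then $v$'s choice multiset in $[v-1]$ must coincide with $u$'s choice multiset in $[u-1]$, an event whose total probability you bound via a union bound over $u<v$. This is exactly the decomposition into the paper's properties $\fb$ and $\fa$, and your deduction that $v>u$ and that the multisets must match is the same argument the paper makes. Where you differ is in the technical estimate of the coincidence probability. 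The paper conditions on the high-probability event $\fd$ that $\deg_{t_\ell}(r_s)\le\sqrt{t_\ell/r_s}(\log t_\ell)^3$ for the chosen targets, which makes the choice events of $t_1$ and $t_2$ conditionally independent, then sums over all nondecreasing $m$-tuples $r_1\le\cdots\le r_m<t_1$. You instead integrate out the randomness via H\"older's inequality and a moment bound $\E[\deg_{v-1}(c)^m\mid\mathcal F_u]^{1/m}=O(\deg_u(c)\sqrt{v/u})$, reducing to the classical estimate $\sum_c\deg_{u-1}(c)^2=O(u\log u)$. Both routes give a bound of the form $k^{2-m}\cdot\mathrm{polylog}$, which for $m\ge3$ is polynomially small, and both are then dominated by the $O(n^{-0.004})$ coming from the degree-uniqueness part. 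Your route is arguably cleaner on the log exponents but requires a P\'olya-urn moment estimate (the $m$-th moment bound on future degree conditioned on $\mathcal F_u$) that is not completely routine and must be proved uniformly in $v>u$ and $\deg_u(c)$; the paper's conditioning-on-$\fd$ trick avoids any moment control by working on a single high-probability event. You also correctly flag, but do not resolve, the multi-edge bookkeeping in the multiset union bound and the correlations among the $\deg_{v-1}(u_i)$; these are genuine details one would need to discharge, but they are of the same nature as details the paper itself handles, and the strategy is sound.
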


\begin{remark}
	One may wonder if one can strengthen the above statement and 
    claim that 
	for $m\ge 3$ we have $\E|\Aut(\PA(m;n))|=1+o(1)$; if this 
    would be the case, 
	then a natural unlabeled version of the model, which we 
    denote by $\PA^u(m; n)$, defined below would be with high 
    probability asymmetric too.
	However, somewhat surprisingly, it is not the case. 
	
	To make this precise,
	let us recall that in the case of the uniform random graph model $\G(n,M)$, 
	where we choose a graph uniformly at random from the family of all graphs with 
	$n$ labeled vertices and $M$ edges, the automorphism group becomes with high probability trivial 
	just above the connectivity threshold; i.e., when $2M/n-\log n\to \infty$;
	in fact, at this moment the expected size of $\Aut(\G(n,M))$ is 
	$1+o(1)$. Moreover,  
	almost precisely at this time the unlabeled uniform random graph $\G^u(n,M)$ 
	which is chosen at random from the family of all unlabeled graphs with 
	$n$  vertices and $M$ edges becomes asymmetric and, furthermore, 
	the structure of  $\G^u(n,M)$ is almost identical to the structure of $\G(n,M)$; i.e.,
	$\G^u(n,M)$ is basically $\G(n,M)$ with erased labels (for more information on this model, see \cite{luczak1991}). 
    As we have already mentioned above, the same is true for $r$-regular random graphs with $r\ge 3$, 
where the uniform labeled and unlabeled graph models have basically the same asymptotic properties~ \cite{bollobas1982}.

	Returning to the preferential attachment case,
	for any $m$-left regular graph $G$ let $S(G)$ denote the class of all $m$-left regular graphs 
	which are isomorphic to $G$, and let $\mathcal{S}$ denote the family 
	containing all $S(G)$, i.e. 
	the family of all `unlabeled $m$-left regular graphs'. We define the 
	unlabeled graph distribution $\PA^u(n;m)$ 
	as the probability distribution on $\mathcal{S}$, where the probability of 
	each class $S(G)$  
	is proportional to the average of the probabilities that a labeled version 
	of $S(G)$ is $\PA(m;n)$, i.e. proportional to 
	$$\frac{1}{|S(G)|}\sum_{H\in S(G)}P(H=\PA(m;n))\,.$$
	Note that this is a different distribution from the one that samples a 
	preferential attachment graph and takes its isomorphism class.
	Note, also, that $\G^u(n,M)$ is defined in the same way, but in that case all 
	terms in the sum are the same, so each equivalence class is equally likely. 
	Now we can ask whether the typical structure of $\PA^u(m; n)$ 
	is the same, or very close to that of $\PA(m; n)$ (i.e., a 
    preferential attachment graph with labels removed); in 
    particular whether it is asymmetric. It seems that it is not 
    this case. To see this, notice that the typical $\PA(m; n)$ is 
    asymmetric and, furthermore, whp it contains
	$L=\Omega_m(n)$ vertices with label at least $3n/4$, such that they 
    are of degree $m$ and have pairwise different neighborhoods 
    contained  in $[n/2]=\{1,2,\dots,n/2\}$.   For such a graph G we 
    clearly have $|S(G)|\ge  L! = \exp(\Omega_m(n \log n))$ and so
    for every $H\in S(G)$ we have 
	$\Pr( H=\PA(m;n))\le \exp(-\Omega_m(n\log n))$ (if the 
    neighborhoods are not different, it is hard to get such an $S(G)$). 
    On the other hand for the graph $H'$ such that 
	all vertices of labels $\ell\ge m+1$ has neighbors $\{1,2,\dots,m\}$ we have 
	$\Pr(H'= \PA(m; n))\ge \exp(-O_m(n))$. Thus, the very asymmetric 
	 $H'$ is much more likely to appear as $\PA^u(m; n)$ than a `typical' graph 
	 $\PA(m; n)$.
	 
	Here we will not investigate the properties of $\PA^u(m; n)$ but rather 
	characterize the information content of the distribution on unlabeled
	graphs given by sampling from
	$\PA(m; n)$ and removing the labels (i.e., taking the isomorphism class
	of the sampled graph).
\end{remark}

\begin{remark}
	The particular threshold at $m=3$ differs in different
    variations of the model.  For example, in \emph{affine}
    preferential attachment, where at time $t$ a vertex $v$
    is chosen with conditional probability proportional to
    $\deg_{t-1}(v) + \delta$, for a fixed parameter 
    $\delta > -m$ (and an initial graph with minimum degree
    greater than $-\delta$),
    our proof technique shows that we have asymmetry with
    high probability whenever
    \begin{align}
    	m > \frac{\sqrt{\delta^2 + 4} - \delta + 2}{2}.
        \label{AsymmetryThreshold}
    \end{align}
    Note that when $\delta = 0$, this gives $m > 2$, in agreement
    with our theorem statement.  When $\delta \to \infty$,
    this translates to $m \geq 2$ 
    (i.e., the asymmetry threshold decreases).  When $\delta < 0$, this places a restriction on $m$ in order for the model
    to be well-defined: $m > -\delta$.  Thus, the set of
    values of $m$ for which our proof technique does not show
    asymmetry is of cardinality $\Theta(1)$.  For instance,
    when $\delta = -2$, it is required that $m > 2$, and
    (\ref{AsymmetryThreshold}) becomes $m \geq 4$.  
\end{remark}


As a direct application of Theorem~\ref{th-main}, we estimate the
structural entropy $H(S(G))$. 
Recall that the entropy $H(G)$ of the labeled graph $G\sim \PA(m; n)$ is defined as
\begin{align*}
	H(G)=-\sum_{G\in \Gr_n} P(G) \log P(G),
\end{align*}
where $\Gr_n$ denotes the set of graphs on $n$ vertices.
The structural entropy $H(S(G))$ is then simply the entropy of the 
isomorphism type of $G$.  Note that these are Shannon entropies,
which are functionals of probability distributions, rather than
of fixed graphs.
We next show how to find a relation between these two entropies.  By
the chain rule for conditional entropy, 
\begin{align}
	H(G)
    = H(S(G)) + H(G | S(G)).
\end{align}
The second term, $H(G | S(G))$, measures our uncertainty about the 
labeled graph if we are given its structure.
We will give a formula for $H(G | S(G))$ in terms of $|\Aut(G)|$ and
another quantity, defined as follows: suppose that, after generating
$G$, we relabel $G$ by drawing a permutation $\pi$ uniformly at random from 
$\Sy_n$, the symmetric group on $n$ letters, and computing $\pi(G)$.  Then
conditioning on $\pi(G)$ yields a probability distribution for possible
values of $\pi^{-1} = \sigma$. We can write $H(G | S(G))$ in
terms of $H(\sigma | \sigma^{-1}(G)) = H(\sigma | \sigma(G))$ and $\E[\log |\Aut(G)|]$ using the chain rule for entropy,
resulting in the following lemma.

\begin{lemma} [Structural entropy for preferential attachment graphs]
        \label{StructuralEntropyProposition}
        Let $G \distrib \PA(m; n)$ for fixed $m \geq 1$, and let $\sigma$
        be a uniformly random permutation from $\Sy_n$.  Then we have
    \begin{align}
        H(G) - H(S(G))
        = H(\sigma | \sigma(G)) - \E[\log |\Aut(G)|].
        \label{EntropyDifference}
    \end{align}
\end{lemma}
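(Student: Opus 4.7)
The plan is to compute $H(G)-H(S(G))$ via two applications of the chain rule for entropy, pivoting on the joint variable $(\sigma, \sigma(G))$. Specifically, I will compute the joint entropy $H(\sigma, G)$ in two different ways: once by exploiting the independence of $\sigma$ and $G$, and once via the bijection $(\sigma, G) \leftrightarrow (\sigma, \sigma(G))$.

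First, since $\sigma$ is independent of $G$ and uniform on $\Sy_n$, I have
\begin{equation*}
H(\sigma, G) = H(\sigma) + H(G) = \log(n!) + H(G).
\end{equation*}
On the other hand, $\sigma(G)$ is determined by $(\sigma, G)$, and conversely $G = \sigma^{-1}(\sigma(G))$ is determined by $(\sigma, \sigma(G))$, so $H(\sigma, G) = H(\sigma, \sigma(G)) = H(\sigma(G)) + H(\sigma \mid \sigma(G))$. Equating the two expressions gives
\begin{equation*}
H(G) = H(\sigma(G)) - \log(n!) + H(\sigma \mid \sigma(G)).
\end{equation*}

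Second, I will evaluate $H(\sigma(G))$ by conditioning on the structure. Since relabeling preserves isomorphism type, $S(\sigma(G)) = S(G)$, so by the chain rule $H(\sigma(G)) = H(S(G)) + H(\sigma(G) \mid S(G))$. The key observation is that, conditionally on $S(G)=S$, the distribution of $\sigma(G)$ is uniform over $S$: fixing any $g$ with $S(g) = S$, the orbit-stabilizer theorem says that each element of $S$ is hit by exactly $|\Aut(g)|$ choices of $\sigma \in \Sy_n$, so $\sigma(g)$ is uniform over $S$; a mixture of uniform distributions on the same set remains uniform. Therefore $H(\sigma(G) \mid S(G)=S) = \log |S| = \log(n!/|\Aut(G)|)$, and taking expectations gives
\begin{equation*}
H(\sigma(G) \mid S(G)) = \log(n!) - \E[\log |\Aut(G)|].
\end{equation*}

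Substituting back into the expression for $H(G)$, the $\log(n!)$ terms cancel and I obtain
\begin{equation*}
H(G) = H(S(G)) - \E[\log |\Aut(G)|] + H(\sigma \mid \sigma(G)),
\end{equation*}
which, after rearrangement, is exactly \eqref{EntropyDifference}.

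There is no real obstacle here; the whole argument is an entropy bookkeeping exercise. The only nontrivial step is the orbit-stabilizer identity $|S(G)| = n!/|\Aut(G)|$ together with the observation that a uniformly random relabeling of $G$ is uniform over the isomorphism class $S(G)$, so that the conditional entropy $H(\sigma(G)\mid S(G))$ is simply $\log(n!) - \E[\log |\Aut(G)|]$. Note that this derivation uses only that $G$ is an $\Sy_n$-valued random graph and is independent of $\sigma$; it does not use the specific structure of the preferential attachment distribution, so the same identity holds for any random graph on $n$ labeled vertices.
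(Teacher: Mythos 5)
Your proof is correct, and it fills in precisely the details that the paper only sketches in the paragraph preceding the lemma (the paper merely says that $H(G\mid S(G))$ can be expressed in terms of $H(\sigma\mid\sigma(G))$ and $\E[\log|\Aut(G)|]$ ``using the chain rule for entropy'' and gives no further argument). Your route of computing $H(\sigma,G)$ two ways---once by independence and once via the bijection $(\sigma,G)\leftrightarrow(\sigma,\sigma(G))$---then decomposing $H(\sigma(G))$ through $S(G)$ and invoking orbit-stabilizer to show $\sigma(G)$ is conditionally uniform on the isomorphism class, is the natural way to make that sketch rigorous. Your closing observation, that the argument uses only the independence and uniformity of $\sigma$ and not the preferential-attachment law, is also accurate and worth keeping.
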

\begin{proof}
	We compute $H(G, \sigma(G), \sigma | S(G))$ in two different ways, by the chain rule.
    \begin{align*}
    	H(G, \sigma(G), \sigma | S(G))
        &= H(G | S(G)) + H(\sigma(G) | G) + H(\sigma | G, \sigma(G), S(G)) \\
        &= H(G | S(G)) + H(\sigma(G) | G) + H(\sigma | G, \sigma(G)),
    \end{align*}
    where the second equality is because knowing $G$ implies that we know $S(G)$ (so that conditioning on both
    $G$ and $S(G)$ is equivalent to conditioning on just $G$).
    Since $\sigma$ is chosen uniformly at random, it is a uniformly random isomorphism between $G$ and
    $\sigma(G)$.  There are always exactly $|\Aut(G)|$ such isomorphisms, so we have that the third term
    is $H(\sigma | G, \sigma(G)) = \E[\log |\Aut(G)|]$.  Thus, our first expression is
    \begin{align}
    	H(G, \sigma(G), \sigma | S(G))
        = H(G | S(G)) + H(\sigma(G) | G) + \E[\log |\Aut(G)|].
        \label{FirstExpr}
    \end{align}
    
    We evaluate the expression alternatively as
    \begin{align}
    	H(G, \sigma(G), \sigma | S(G))
        &= H(\sigma(G) | S(G)) + H(\sigma | \sigma(G), S(G)) + H(G | \sigma, \sigma(G), S(G)) \nonumber \\
        &= H(\sigma(G) | S(G)) + H(\sigma | \sigma(G)).
        \label{SecondExpr}
    \end{align}
    Here, the last term of the second expression is $0$ because $G$ is a deterministic function of the pair $(\sigma, \sigma(G))$.
    The second term in the same expression simplifies because knowing $\sigma(G)$ implies that we also know $S(G)$.
    
    Combining (\ref{FirstExpr}) and (\ref{SecondExpr}) yields
    \begin{align*}
    	H(G | S(G)) = H(\sigma | \sigma(G)) - \E[\log |\Aut(G)|] + H(\sigma(G) | S(G)) - H(\sigma(G) | G).
    \end{align*}
    Now, we claim that $H(\sigma(G) | S(G)) = H(\sigma(G) | G)$, which will complete the proof.  This is a result
    of the fact that, under both conditionings, $\sigma(G)$ is a uniformly random element of $S(G)$, so that both
    conditional entropies must be equal.  We thus have
    \begin{align*}
    	H(G | S(G)) = H(\sigma | \sigma(G)) - \E[\log |\Aut(G)|],
    \end{align*}
    as desired.
\end{proof}

\begin{remark}
	In the proof of Theorem~\ref{StructuralEntropyTheorem} below,
    we prove an alternative, more combinatorial representation for
    $H(\sigma | \sigma(G))$; see (\ref{HCondSigAlternate}).
\end{remark}

To estimate the structural entropy $H(S(G))$ using Lemma~\ref{StructuralEntropyProposition}, we need 
an expression for the labeled graph entropy $H(G)$
and to evaluate the two terms on the right-hand side of
(\ref{EntropyDifference}). 

A one-term expansion for the labeled entropy of the preferential attachment model
is given in Theorem~1 of \cite{sauerhoff}:
\begin{align*}
	H(G) = m n\log n + \Theta(n).
\end{align*}
	Using refined results on the degree sequence, we are able to give a precise formula for
    the second term, resulting in an error term of $o(n)$.  In particular, the $\Theta(n)$ term is
    \begin{align*}
    	m\left(\log 2m - 1 - \log m! - A\right) n + o(n),
    \end{align*}
    where
    \begin{align*}
    	A(m) = A = \sum_{d=m}^{\infty} \frac{\log d}{(d+1)(d+2)}. 
    \end{align*}
    However, for the present application,
    this level of precision is not needed.

Now, we are in the position to complete our computation of the structural entropy.
\begin{theorem}[Structural entropy of preferential attachment graphs]
	\label{StructuralEntropyTheorem}
	Let $m \geq 3$ be fixed.
    Consider $G \distrib \PA(m; n)$.  We have
    \begin{align}
    	H(S(G))
        = (m-1) n\log n + R(n),
    \end{align}
    where $R(n)$ satisfies
    \begin{align*}
        C n
        \leq |R(n)| 
        \leq O(n\log \log n)
    \end{align*}
    for some nonzero constant $C=C(m)$.
\end{theorem}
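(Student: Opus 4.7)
The plan is to apply Lemma \ref{StructuralEntropyProposition} and bound each term on its right-hand side, isolating the unknown $H(\sigma\mid\sigma(G))$. From the lemma,
\[ H(S(G)) = H(G) - H(\sigma\mid\sigma(G)) + \E[\log|\Aut(G)|]. \]
Theorem \ref{PAGraphEntropyTheorem} supplies $H(G) = mn\log n + cn + o(n)$ with $c = m(\log 2m - 1 - \log m! - A)$. Theorem \ref{th-main}, combined with the deterministic bound $\log|\Aut(G)|\le \log n!$, yields
\[ \E[\log|\Aut(G)|] \le \log(n!)\cdot P(|\Aut(G)|>1) = O\bigl(n^{1-0.004}\log n\bigr) = o(n). \]
Both of these can be absorbed into the remainder $R(n)$, so the whole problem reduces to estimating $H(\sigma\mid\sigma(G))$ and showing it has the form $n\log n + \Theta(n) + O(n\log\log n)$.

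The first step in handling $H(\sigma\mid\sigma(G))$ is to derive the alternative combinatorial formula promised in the remark. Applying Bayes' rule and grouping permutations $\sigma$ by the valid preimage $\sigma^{-1}(H)$ in each fiber gives
\[ H(\sigma\mid\sigma(G)=H) = \log|\Aut(H)| + H(G\mid S(G)=S(H)), \]
and hence $H(\sigma\mid\sigma(G)) = \E[\log|\Aut(G)|] + H(G\mid S(G))$. The support of the conditional distribution $P(G\mid S(G)=S)$ is exactly the set of $m$-left regular labelings of the unlabeled structure $S$, whose cardinality I denote $N(S)$. In particular $H(G\mid S(G))\le \E[\log N(S(G))]$, with the gap equal to an expected Kullback--Leibler divergence measuring the non-uniformity of the preferential attachment measure across the labelings that realize $S(G)$.

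Next, I would pass to the directed-acyclic-graph representation of $G$ (each edge oriented newer-to-older); then $N(S(G))$ is precisely the number of topological orderings of the PA-DAG that respect all out-degrees being $m$. Proposition \ref{FatDAGProposition} is tailor-made for this: it gives concentration-type estimates on the characteristics of the DAG (typical depths, ancestor-set sizes, and degree profile), which fed into a Cayley/Stirling-style counting argument produce
\[ \E[\log N(S(G))] = n\log n - n + \Theta(n) + O(n\log\log n). \]
A careful expansion of $\log P(G)$ as a sum of $\log$-degree contributions, combined with the degree-sequence estimates of Section \ref{DegreeSequenceSection} (which pin the typical fluctuations of $\log P(G)$ conditioned on $S(G)$ to precision $O(n\log\log n)$), then quantifies the KL gap and yields $H(\sigma\mid\sigma(G)) = n\log n + c'n + O(n\log\log n)$ for an explicit constant $c'=c'(m)$. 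Subtracting from $H(G)$ produces $H(S(G)) = (m-1)n\log n + (c-c')n + O(n\log\log n)$, which is the claimed expansion provided $c\neq c'$.

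The main obstacle is twofold. The sharper difficulty is the $O(n\log\log n)$ precision on $\log N(S(G))$: a naive bound via Stirling yields only $O(n)$ error, and pushing past this requires the depth and ancestor-set concentration for PA-DAGs that is exactly the content of Proposition \ref{FatDAGProposition}. The more subtle obstacle is the lower bound $|R(n)|\ge Cn$. A priori the $\Theta(n)$ contributions of $H(G)$ and $H(\sigma\mid\sigma(G))$ could conspire to cancel; ruling this out amounts to showing $c\neq c'$, i.e., that the preferential attachment measure is strictly non-uniform on the valid orderings of a typical unlabeled structure at the scale of $n$. This non-cancellation is extracted from the degree-sequence analysis in Section \ref{DegreeSequenceSection}: different topological orderings of the PA-DAG assign strictly different masses through the degree profile, and summing these differences over the $n$ vertices produces a strictly nonzero $\Theta(n)$ gap between $c$ and $c'$.
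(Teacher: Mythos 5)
Your opening moves match the paper's: decompose via Lemma~\ref{StructuralEntropyProposition}, plug in Theorem~\ref{PAGraphEntropyTheorem} for $H(G)$, and kill $\E[\log|\Aut(G)|]$ via Theorem~\ref{th-main} and the deterministic bound $\log n!$. The divergence, and the gap, is in how you handle $H(\sigma\mid\sigma(G))$.

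You propose to bound $H(G\mid S(G))$ by $\E[\log N(S(G))]$, where $N(S)$ is the number of $m$-left-regular labelings of $S$, and then to ``quantify the KL gap'' coming from the non-uniformity of the PA measure on those labelings. This is where the argument would stall: that KL divergence is of the same order as the quantity you are trying to compute, and you offer no mechanism to control it beyond gesturing at the degree-sequence lemmas. The paper sidesteps this entirely by conditioning on the unlabeled \emph{directed} graph $\udag(G)$ rather than on $S(G)$. Lemma~\ref{EquiprobabilityLemma} shows that $P(G=\cdot\mid\udag(G))$ is \emph{exactly uniform} on the admissible labelings with that DAG structure, so the KL gap vanishes identically; Lemma~\ref{NumDAGSUpperBound} shows $H(\udag(G)\mid S(G))=O(n)$, so the two conditionings differ by only $O(n)$; and the identity $|\Adm_B(g)|=|\Gamma_B(g)|/|\Aut(g)|$ then gives $H(\sigma\mid\sigma(G))=\E[\log|\Gamma_{\udag(G)}(G)|]+O(n)$ (Lemma~\ref{RelatingToGamma}) with nothing further to estimate on the non-uniformity side. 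Your route could be patched by inserting this observation, but without it the ``KL gap'' is a genuine hole.

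A second overclaim: you assert $H(\sigma\mid\sigma(G))=n\log n + c'n + O(n\log\log n)$ for a specific constant $c'$, and then base the lower bound $|R(n)|\ge Cn$ on a ``non-cancellation'' argument $c\neq c'$. The paper does not achieve and does not need either of these. Its lower bound on $\E[\log|\Gamma_{\udag(G)}(G)|]$ via the level factorization $|\Gamma_{\udag(G)}(G)|\ge\prod_j|L_j|!$ and Proposition~\ref{FatDAGProposition} gives only $n\log n - O(n\log\log n)$, and the matching upper bound is just the trivial $H(\sigma\mid\sigma(G))\le\log n! = n\log n - n + O(\log n)$. Because the second-order term of $H(\sigma\mid\sigma(G))$ cannot be pinned to scale $n$ by these methods, the theorem is deliberately stated with the wide window $Cn\le|R(n)|\le O(n\log\log n)$; the linear lower bound falls out of the trivial $\log n!$ bound, not from any claim that topological orderings carry distinguishable masses at scale $n$.
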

To prove this, we evaluate (\ref{EntropyDifference}) by relating $H(\sigma | \sigma(G))$ to a combinatorial
parameter of the directed version of $G$.
We show this derivation in Section~\ref{StructuralEntropyTheoremProofSection}.

\section{Results on the Degree Sequence}
\label{DegreeSequenceSection}
In this section, we present results on the degree sequence of preferential attachment
graphs which we will use in the proofs of our main results in subsequent sections.

First, we denote by $\deg_t(s)$ the degree of a vertex $s < t$
after time $t$ (i.e., after vertex $t$ has made its choices).

The first result, which specializes Lemma~4 of \cite{BollobasRiordan2004}, 
gives an upper bound on the probability that two given vertices
are adjacent.
\begin{lemma}
	\label{AdjProbBoundCorollary}
    Let $w < v$.  Then the probability that $v$ is adjacent to $w$ is bounded above by $\frac{Cm}{\sqrt{vw}}$.
    In particular, each two vertices $v, w \geq \epsilon n$
    are adjacent with probability smaller than $\frac{Cm}{\epsilon n}$.
\end{lemma}

It is known that for $t>s$, the expectation of $\deg_t(s)$ is $O(\sqrt{t/s})$ (see \cite{hofstad}, Theorem 8.2). 
We first state a simple tail bound to the right of this expectation, which
may be found in 
\cite{FriezeKaronski}, Lemma 17.2:  
\begin{lemma}[Right tail bound for a vertex degree at a specific time]
        \label{FriezeKaronskiLemma}
    Let $r < t$.  Then
    \begin{align*}
        P(\deg_{t}(r) \geq Ae^m (t/r)^{1/2}(\log t)^2)
        = O(t^{-A})
    \end{align*}
for any constant $A>0$ and any $t$.
\end{lemma}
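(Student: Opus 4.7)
The plan is to derive Lemma~\ref{FriezeKaronskiLemma} from Lemma~\ref{DegreeProbBoundLemma} by a union bound over possible degree values. Writing $D = Ae^m(t/r)^{1/2}(\log t)^2$ and using the deterministic inequality $\deg_t(r) \leq mt$, I would first decompose
$$P[\deg_t(r) \geq D] \leq \sum_{d=\lceil D \rceil}^{mt} P[\deg_t(r) = d]$$
and then substitute the pointwise bound $P[\deg_t(r) = d] \leq (2m+d)^m \exp(-d\sqrt{r/t} + O(d^2/\sqrt{tr}))$, which is the Lemma~\ref{DegreeProbBoundLemma} estimate once one applies the elementary inequality $1-x \leq e^{-x}$ to the $(1-\sqrt{w/v}+\cdots)^d$ factor (with a compensating sign on $\sqrt{w/v}\,d$ in the ``in particular'' display).

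The main quantitative step is to evaluate the leading summand at $d=D$. The principal exponent is $-D\sqrt{r/t} = -Ae^m(\log t)^2$, which produces a factor of order $t^{-Ae^m \log t}$, super-polynomially small in $t$; the prefactor $(2m+D)^m$ and the length of the sum together contribute only $\mathrm{poly}(t)$. Since the exponent $-d\sqrt{r/t}$ is monotone decreasing in $d$, no later term exceeds the first, and the whole sum is bounded by $\mathrm{poly}(t)\cdot t^{-Ae^m\log t} = o(t^{-A})$, giving what is claimed.

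The main obstacle is the error term $\exp(O(d^2/\sqrt{tr}))$ in Lemma~\ref{DegreeProbBoundLemma}: at $d=D$ this has size $\exp(O(t^{1/2}(\log t)^4/r^{3/2}))$, which is harmless for $r \gg t^{1/3}(\log t)^{4/3}$ but could swamp the main exponential decay when $r$ is very small. My fallback plan is a direct moment-generating-function argument, exploiting that $\deg_s(r)-\deg_{s-1}(r)$ is conditionally $\mathrm{Binomial}(m, \deg_{s-1}(r)/(2m(s-1)))$. This gives the clean recurrence
$$\E\!\left[e^{\lambda \deg_s(r)} \mid \deg_{s-1}(r)\right] \leq \exp\!\bigl(\deg_{s-1}(r)\bigl(\lambda + (e^\lambda-1)/(2(s-1))\bigr)\bigr).$$
Iterating backwards from $s=t$ to $s=r$ with $\lambda$ of order $(r/t)^{1/2}$, chosen so that the iterated map stays of order $(r/s)^{1/2}$ at every intermediate step, yields $\E[e^{\lambda \deg_t(r)}] = e^{O(m)}$; a Markov inequality then gives $P[\deg_t(r)\geq D] \leq e^{O(m)-\lambda D} = e^{O(m)-Ae^m(\log t)^2} = O(t^{-A})$ uniformly in $r$. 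The factor $e^m$ built into the threshold $D$ is precisely what one needs to absorb the $e^{O(m)}$ MGF constant, so the bound holds across all regimes of $r$ relative to $t$.
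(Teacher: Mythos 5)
The paper does not actually prove this lemma: it cites \cite{FriezeKaronski} and adds only the parenthetical remark that the bound ``also is a corollary of Lemma~\ref{DegreeProbBoundLemma}.'' So your proposal is necessarily a different route from what the paper does, and the useful question is whether your two attempts actually close the gap.

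Your first attempt takes the parenthetical claim at face value and, importantly, uncovers that it is not straightforwardly true: at the threshold $d = D = Ae^m\sqrt{t/r}(\log t)^2$ the error exponent $O(d^2/\sqrt{tr})$ in Lemma~\ref{DegreeProbBoundLemma} has size $\Theta\!\bigl(\sqrt{t}(\log t)^4/r^{3/2}\bigr)$, which exceeds the main decay $Ae^m(\log t)^2$ precisely when $r \lesssim t^{1/3}(\log t)^{4/3}$. In that regime the bound from Lemma~\ref{DegreeProbBoundLemma} is vacuous, so ``union-bound the pointwise estimate'' does not give the stated tail bound for all $r<t$. You are right to flag this; the paper's parenthetical aside overstates what the pointwise lemma delivers on its own.

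Your fallback MGF argument is the right fix and is essentially the standard proof. The per-step recursion
\begin{equation*}
  \E\!\left[e^{\lambda\deg_s(r)}\mid\deg_{s-1}(r)\right]
  \le \exp\!\bigl(\deg_{s-1}(r)\bigl(\lambda + (e^\lambda-1)/(2(s-1))\bigr)\bigr)
\end{equation*}
is correct (it comes from $(1+p(e^\lambda-1))^m\le e^{mp(e^\lambda-1)}$ with $p=\deg_{s-1}(r)/(2m(s-1))$), and iterating with a backward sequence $\lambda_{s-1}=\lambda_s+(e^{\lambda_s}-1)/(2(s-1))$ starting from $\lambda_t=c\sqrt{r/t}$ reduces the problem to $\E[e^{\lambda_r m}]$. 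The one place that needs more care than your sketch acknowledges is the control of this iteration: naively bounding $e^{\lambda_s}-1\le K\lambda_s$ with a fixed $K>1$ gives $\lambda_r\approx\lambda_t(t/r)^{K/2}$, which overshoots the target scale $(r/s)^{-1/2}$ when $t/r$ is large. The clean way is to track $\alpha_s:=\lambda_s\sqrt{s/r}$ and bound $e^x-1\le x+x^2$; one then gets a multiplicative correction $\exp\!\bigl(\sum_{s>r}(\tfrac{1}{2s(s-1)}+\tfrac{\alpha_s\sqrt{r}}{2s^{3/2}})\bigr)$, whose two sums are both $O(1)$ uniformly in $r$, so taking $c$ small enough keeps $\alpha_s=O(1)$ throughout and yields $\lambda_r=O(1)$, hence $\E[e^{\lambda_t\deg_t(r)}]=e^{O(m)}$. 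With that detail supplied, Markov gives $P[\deg_t(r)\ge D]\le e^{O(m)-cAe^m(\log t)^2}=O(t^{-A})$ exactly as you claim. So the proposal is sound, with a genuinely more self-contained argument than the paper offers (which merely cites the textbook); the only thing I would push you to write out is the bootstrap that keeps $\lambda_s$ of order $\sqrt{r/s}$.
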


We can prove a similar left tail bound for the random variable $\deg_{t}(s)$ whenever
$s \ll t$, as captured in the following lemma.

\begin{lemma}[Degree left tail bound]
	Let $v = O(T^{1-\epsilon})$ as $T \to \infty$, for some fixed $\epsilon \in (0, 1/2)$.
    Then there exist some $C, D > 0$ such that
    \begin{align}
    	\Pr\left( \deg_{T}(v) < C \left(\frac{T}{v} \right)^{(1-\epsilon)^2/(2.0001)} \right)  
        \leq 
        e^{-D\epsilon^3 \log(T)}
        = T^{-D \epsilon^3}.
        \label{LeftTailBoundProbExpr}
    \end{align}
    \label{DegreeLeftTailLemma}
\end{lemma}
To prove this, we need the following coarser lemma.
\begin{lemma}
	\label{LogLeftTailBoundLemma}
    Let $v < T^{1-\epsilon}$, for some fixed $\epsilon > 0$.  Then there exist constants $C, D > 0$ independent of $\epsilon$
    such that
    \begin{align}
    	\Pr(\deg_{vT^{\epsilon}}(v) < C\epsilon \log T)
        \leq T^{-D\epsilon}
    \end{align}
    for $T$ sufficiently large.
\end{lemma}
\begin{proof}
    We observe the graph at exponentially increasing time steps: for some $\beta > 0$,
    let $t_0 = v$, $t_j = (1+\beta)^j t_0$, $t_k = (1+\beta)^k t_0 = vT^{\epsilon}$ 
    (so $k = \frac{\epsilon \log T}{\log(1+\beta)}$).
    Note that $\deg_{t_0}(v) = \deg_{v}(v) = m$.  
    
    Let us upper bound the probability $p_{j+1}$ that no connection to vertex $v$ is
    made by any vertex in the subinterval $(t_{j}, t_{j+1}]$:
    \begin{align}
    	p_{j+1}
        \leq \left( 1 - \frac{m}{2mt_{j+1}} \right)^{m(t_{j+1} - t_j)}
        = \left( 1 - \frac{1}{2t_{j+1}} \right)^{m\beta t_j},
    \end{align}
    which is at most some positive constant $\rho = \rho(m\beta)$, uniform in $j$,
    satisfying $\rho < 1$.  This follows from the inequality $1 - x \leq e^{-x}$ for all
    $x \in \R$.
    Thus, the total number of connections to vertex $v$ in all subintervals can be
    stochastically lower bounded by a binomial random variable with number of trials $k = \Theta(\epsilon \log T)$ and success probability $\rho(m\beta)$: for any $d \geq 0$,
    \begin{align}
    	\Pr(\deg_{t_k}(v) - m \geq d)
        \geq \Pr(\Binomial(k, 1-\rho) \geq d).
    \end{align}
    In particular, as $T \to \infty$, this implies (using the Chernoff bound) that
    with probability $1 - T^{-D\epsilon}$, the number of subintervals which contribute
    at least one new edge to $v$ is at least $C\epsilon \log T$, for some $C$, so
    that $\deg_{vT^{\epsilon}}(v) \geq C\epsilon \log T$, which completes the proof. 
\end{proof}

With the previous lemma in hand, we are now ready to prove our left tail bound.
\begin{proof}[Proof of Lemma~\ref{DegreeLeftTailLemma}]
	Similar to the proof of Lemma~\ref{LogLeftTailBoundLemma}, we observe the graph
    at exponentially increasing times: fix a small $\alpha > 0$, and let
    $t_0 = vT^{\epsilon}$, $t_j = (1+\alpha)^j t_0$, $t_k = (1+\alpha)^k t_0 = T$, 
    so that $k = \frac{\log(T/t_0)}{\log(1+\alpha)}$.  Denote by 
    $d_j = \deg_{t_j}(v)$ and $\Delta_{j+1} = d_{j+1} - d_j$, for each $j$.
    
	In the interval $(t_j, t_{j+1}]$, conditioned on the graph up to time $t_j$, $\Delta_{j+1}$ is
	stochastically lower bounded by a binomially distributed
	random variable with parameters $(t_{j+1} - t_j)m = \alpha t_j m$ and $p_{j+1} = \frac{d_j}{2mt_{j+1}}$.
    The former parameter is simply the interval length (in terms of number of vertex choices).
	The latter parameter comes from the fact that the degree of $v$ at any point in
    the interval is at least $d_j$, and the total degree of the graph is at most $2mt_{j+1}$.
	I.e.,
	\begin{align}
		\Delta_{j+1}\big| G_{t_j} \succeq_{st} \Binomial\left(m\alpha t_j, \frac{d_j}{2 t_{j+1} m} \right),
        \label{StochDomExpr}
	\end{align}
    where $\succeq_{st}$ denotes stochastic domination.

    This suggests that we define the \emph{bad} event $B_{j} = [\Delta_{j} <  \alpha t_{j-1} m p_{j}(1-\epsilon) ]$, for arbitrary $\epsilon > 0$,
	and for $j \in [1, k]$.  We further define $B_0 = [d_0 < C\epsilon\log T]$, 
    for some constant $C > 0$.
    
	Conditioning on all of the $B_j$ (for $j \in \{0, ..., k\}$) failing to hold, we have
	\begin{align}
		\Pr\left(\intersect_{j < k} \left[ d_{j+1} \geq d_{j} \left( 1 + \frac{(1-\epsilon)\alpha}{2(1+\alpha)} \right)\right] ~\big| ~ \intersect_{j=0}^{k} \neg B_j \right)
	    = 1,
        \label{CondEqn}
	\end{align}
	recalling that $d_{j+1} = d_j + \Delta_{j+1}$ by definition.
	This in particular implies that (still under the same conditioning)
	\begin{align}
		d_k \geq d_0 \cdot \left( 1 + \frac{(1-\epsilon)\alpha}{2(1+\alpha)} \right)^{k}
	    = d_0 \exp \left( \log(T/t_0) \frac{\log(1 + \frac{(1-\epsilon)\alpha}{2(1+\alpha)} )}{\log(1 +\alpha) } \right).
	\end{align}
	Taking $\alpha$ close enough to $0$, this becomes
	\begin{align}
		d_k \geq 
		d_0 \exp\left( \frac{1-\epsilon}{2(1 + o_{\alpha\to 0}(1))}\log(T/t_0) \right)
	    = d_0 (T/t_0)^{\frac{1-\epsilon}{2.0001}},
	    \label{DegreeLowerBoundEqn}
	\end{align}
    as in the statement of the lemma.
    
    Now, it remains to lower bound the probability $\Pr(\intersect_{j=0}^k \neg B_j)$.
    We may write it as
    \begin{align*}
    	\Pr\left(\intersect_{j=0}^k \neg B_j \right)
        = \Pr(\neg B_0) \prod_{j=1}^{k} \Pr(\neg B_j | \neg B_0, ..., \neg B_{j-1})
        \geq (1 - T^{-D\epsilon}) \prod_{j=1}^{k} \Pr(\neg B_j | \neg B_0, ..., \neg B_{j-1}),
    \end{align*}
    where the inequality is by Lemma~\ref{LogLeftTailBoundLemma}.

    Now, by the stochastic domination (\ref{StochDomExpr}), the conditioning,
    and the Chernoff bound, the $j$th factor of the product is lower bounded as
    follows:
	\begin{align}
		\Pr(\neg B_j | \neg B_0, ..., \neg B_{j-1})
	    &\geq \Pr( \Binomial(\alpha t_{j-1} m, p_{j}) \geq \alpha t_{j-1} m p_j (1-\epsilon) | \neg B_0, ..., \neg B_{j-1} ) \\
	    &\geq 1 - \exp\left( - \frac{\epsilon^2 \alpha d_{j-1}}{2(1+\alpha)} \right). \nonumber
	\end{align}
	Under the conditioning, $d_{j-1}$ is further lower bounded by $\left( 1 + \frac{(1-\epsilon)\alpha}{2(1+\alpha)} \right)^{j-1} C\epsilon \log T 
    \geq \left(1 + \frac{\alpha}{4(1+\alpha)} \right)^{j-1} C\epsilon \log T$ (using the fact that $\epsilon < 1/2$), resulting in
	\begin{align}
		\Pr(\neg B_j | \neg B_0, ..., \neg B_{j-1})
	    \geq 
	    1 - \exp\left( - C\frac{\epsilon^3 \alpha}{2(1+\alpha)} \cdot \left(1 + \frac{\alpha }{4(1+\alpha)} \right)^{j-1} \log(T) \right).
	\end{align}
	
	This implies
	\begin{align}
		\Pr\left(\intersect_{j=0}^k \neg B_j \right) 
	    \geq \Pr(\neg B_0) \cdot \prod_{j=1}^k \left(1 - \exp\left( - C\frac{\epsilon^3 \alpha}{2(1+\alpha)} \cdot \left(1 + \frac{\alpha }{4(1+\alpha)} \right)^{j-1} \log(T) \right) \right).
	    \label{ImportantProduct}
	\end{align}
	For convenience, set $C' = C\frac{\alpha}{2(1+\alpha)}/D'$ and $D' = 1 + \frac{\alpha}{4(1+\alpha)}$.  Note that $D' > 1$.  So the product in (\ref{ImportantProduct}) can be written (after some simple asymptotic analysis) as
	\begin{align*}
    	\prod_{j=1}^k \left(1 - \exp\left( -\epsilon^3 C' \cdot D'^{j} \log(T) \right) \right)
        = 1 - \Theta(T^{-\epsilon^3 C'D'}).
    \end{align*}
    This implies, after combination with the lower bound on $\Pr(\neg B_0)$, that
    we can write
    \begin{align}
    	\Pr\left(\intersect_{j=0}^k \neg B_j \right)
        \geq (1 - T^{-D\epsilon})(1 - \Theta(-T^{-\epsilon^3C'D'}))
        \geq 1 - T^{-D''\epsilon^3},
    \end{align}
    for some $D'' > 0$ (depending on $\alpha$), as claimed.  Combining this with (\ref{CondEqn}) yields (\ref{DegreeLowerBoundEqn}) with the claimed probability bound as follows:
    \begin{align*}
    	\Pr(d_k \geq d_0(T/t_0)^{\frac{1-\epsilon}{ 2.0001}})
        &\geq \Pr\left(\intersect_{j < k} \left[ d_{j+1} \geq d_{j} \left( 1 + \frac{(1-\epsilon)\alpha}{2(1+\alpha)} \right)\right] \right) \\
        &\geq \Pr\left(\intersect_{j < k} \left[ d_{j+1} \geq d_{j} \left( 1 + \frac{(1-\epsilon)\alpha}{2(1+\alpha)} \right)\right] ~\big| ~ \intersect_{j=0}^{k} \neg B_j \right) \cdot \Pr\left(\intersect_{j=0}^k \neg B_j \right) \\
        &\geq 1 \cdot (1 - T^{-D''\epsilon^3}),
    \end{align*}
    as required.
\end{proof}

Using Lemma~\ref{DegreeLeftTailLemma}, we can prove a corollary roughly lower bounding
the typical minimum degree of the collection of vertices before a given time.
\begin{corollary}
	Let $\Delta > 0$ be fixed. 
	There exists some small enough $\delta > 0$ and positive constant $D$ such that
    \begin{align}
    	\Pr\left( \union_{w < T^{\delta}} \deg_{T}(w) < C\left( T^{1-\Delta} \right)^{1/2} \right)
        \leq T^{-D}
    \end{align}
    as $T\to\infty$.
    \label{EarliestDegreesCorollary}
\end{corollary}
\begin{proof}
	This follows immediately from the fact that the probability bound in Lemma~\ref{DegreeLeftTailLemma}
    is monotone in $\epsilon$ and constant with respect to $v$.  We omit the simple details.
\end{proof}

The next result gives a bound on the probability that two early vertices
have the same degree.
\begin{lemma}
        \label{lkey}
        There exist positive constants $\Delta < 1$ and $c$ such that
	the probability that for some $s<s'<k^2=n^{2\Delta}$ 
	we have $\deg_n(s)=\deg_n(s')$ 
	is $O(n^{-c})$.  
\end{lemma}
\begin{proof}
Let $s<s'<k^2=n^{2\Delta}$, for some $\Delta > 0$ to be chosen.  
We first estimate the probability that $\deg_n(s)=\deg_n(s')$.
In order to do so we set $n'=n^{0.6}$ and define
$$\ddeg(s)= \deg_{n-n'}(s)\quad \textrm{ and }\quad  \dddeg(s)=\deg_n(s)- \ddeg(s)\,.$$
Note that
\begin{align}
P(\deg_n(s)=\deg_n(s'))=&\sum_{\dd,\dd',\ddd'}P(\deg_n(s)=\deg_n(s')| \ddeg(s)=\dd,\ddeg(s')=\dd',
\dddeg(s')=\ddd')\nonumber\\
&\quad\quad \times P(\ddeg(s)=\dd,\ddeg(s')=\dd',
\dddeg(s')=\ddd')\nonumber\\
=&\sum_{\dd,\dd',\ddd'}P(\dddeg(s)=\dd'+\ddd'-\dd| \ddeg(s)=\dd,\ddeg(s')=\dd',
\dddeg(s')=\ddd')\nonumber \\
&\quad\quad \times P(\ddeg(s)=\dd,\ddeg(s')=\dd',
\dddeg(s')=\ddd')\,.\label{eq110}
\end{align}

Observe that due to Lemma~\ref{DegreeLeftTailLemma} (alternatively, Corollary~\ref{EarliestDegreesCorollary}) and Lemma~\ref{FriezeKaronskiLemma}, with probability
$1-O(n^{-c})$, for some appropriate $c > 0$ and small enough $k=n^{\Delta}$, a vertex $s\in [k^2]$ has degree between $n^{0.488}$ and $n^{0.51}$ at any time in the interval $[n-n', n]$.  Importantly, note that if this holds with probability $1 - O(n^{-c})$
for a given choice of $\Delta$, then the same holds for all smaller choices of $\Delta$, with the same value for $c$ (this is a consequence of the fact that the probability bound in Lemma~\ref{DegreeLeftTailLemma} is a function of $\epsilon$ and not of $v$).

Furthermore, one can estimate the random variable
$\dddeg(s)$ conditioned on $\ddeg(s)=\dd$ from above and 
below by binomial distributed random variables and use Chernoff bound to show that 		
with probability at least $1-O(n^{-c})$ we have 
\begin{equation}\label{eq111}
	\Big|\frac{\dd n'}{2mn}-\dddeg(s)\Big|=\Big|0.5m^{-1}\dd n^{-0.4}-\dddeg(s)\Big|\le \Big(\frac{\dd n'}{2mn}\Big)^{0.6}\le n^{0.08}\,.
\end{equation}

Thus, in order to estimate $P(\deg_n(s)=\deg_n(s'))$, it is enough 
to bound 
\begin{align*}
\rho(\dd',\ddd',\dd)&=P(\dddeg(s)=\dd'+\ddd'-\dd| \ddeg(s)=\dd,\ddeg(s')=\dd',
\dddeg(s')=\ddd')
\end{align*}
for $n^{0.488}\le \dd,\dd'\le n^{0.51}$ and 
$$|0.5\dd n^{-0.4}/m - (\dd'+\ddd'-\dd)|\le n^{0.08}\,.$$
In order to  simplify the notation set $\ell=\dd'+\ddd'-\dd$. 
Let us estimate the probability that $\dddeg(s)=\ell$ conditioned on $\ddeg(s)=\dd$ 
and $\ddeg(s')=\dd'$. The probability that some vertex $v>n-n'$ is connected 		
to $s$ by more than one edge is bounded from above by 
$$Cn'\Big(\frac{m\deg_{n}(s)}{n-n'}\Big)^2  \le n^{0.6}\,O(n^{-0.98})=O(n^{-0.38})\,$$
so we can omit  this case in further analysis.
The probability that we connect a given vertex $v>n-n'$ with $s$ 
 is given by 
 \begin{equation}\label{eq112}
\frac{m\deg_{v-1}(s)}{2m(v-1)}=\frac{\dd+O(\dd n^{-0.4})}{2(n-O(n'))}=\frac{\dd}{2n}\Big(1+O(n^{-0.4})\Big).
\end{equation}  
Consequently, the probability that $\dddeg(s)=\ell$ conditioned on $\ddeg(s)=\dd$ and $\ddeg(s') = \dd'$
is given by 
$$\binom {n'}{\ell}\rho^\ell (1-\rho)^{n'-\ell}\Big(1+O(n^{-0.4})\Big)^{\ell}
\Big(1+O(n^{-0.4}\dd/n)\Big)^{n'-\ell}\,,$$
where $\rho=\dd/{2n}$.

If we additionally condition on the fact that $\dddeg(s')=\ddd'$ (so that we now have conditioned on $\ddeg(s) = \dd, \ddeg(s') = \dd'$, and $\dddeg(s') = \ddd'$), it will result in  
an extra factor of the order
$\Big(1+O(\dd/2n)\Big)^{\ddd'}$ since it means that some $\ddd'$
vertices already made their choice (and selected $s'$ as their neighbor). 
Note however that, since $\ell,\ddd'=O( \dd n'/n)=O(n^{0.11})$ we have
\begin{align*}
\Big(1+O(n^{-0.4})\Big)^{\ell}&=1+O(n^{-0.29})\\
\Big(1+O(n^{-0.4}\dd/n)\Big)^{n'-\ell}&=1+O(n^{-0.29})\\
\Big(1+O(\dd/2n)\Big)^{\ddd'}&=1+O(n^{-0.48})\,.
\end{align*} 
Hence, the probability that $\dddeg(s)=\ell$ conditioned on $\ddeg(s)=\dd$, $\ddeg(s')=\dd'$,  
and $\dddeg(s')=\ddd'$ is given by 
$$\binom {n'}{\ell}\rho^\ell (1-\rho)^{n'-\ell}\Big(1+O(n^{-0.29}))\,,$$
and so it is well approximated by the binomial distribution. On the other hand, 
the probability that the random variable with binomial distribution 
with parameters $n'$ and $\rho$ takes a particular value 
is bounded from above by  $O(1/\sqrt{n'\rho})$.
Thus, for a given pair of vertices $s<s'<k^2=n^{2\Delta}$  
we have 
$$P(\deg_n(s)=\deg_n(s'))=O(\sqrt{n/n'\dd})+O(n^{-c})=O(n^{-c})\,.$$  
Hence, the probability that such a pair of vertices, $s<s'<k^2=n^{2\Delta}$ 
exists is bounded from above by $O(k^4 n^{-c})$, and, as remarked at the beginning 
of the proof, $k = n^{\Delta}$ may be chosen small enough so that this yields a bound of the form $O(n^{-c'})$, for $c' > 0$.  
\end{proof}

\section{Proof of Theorem~\ref{th-main}}
\label{sec-proof1}

In this section we shall give a complete proof of Theorem~\ref{th-main}.
Let us first define two properties, $\fa$ and $\fb$ of $\PA(m; n)$ 
which are crucial for 
our argument. Here and below  we set $k = k(n) = n^{\Delta}$,
$\tk = \tk(n) = n^{\Delta'}$, and $\tk' = \tk'(n) = n^{\Delta''}$
for some small enough $0< \Delta < \Delta' < \Delta''$ to be chosen.  Specifically,
we will choose $\Delta''$ first, followed by $\Delta'$, followed by $\Delta$.

\begin{itemize}
	\item[($\fa$)] 
    	$\PA(m; n)$ has property $\fa$ if no two vertices $t_1,t_2$, where 	
        $k<t_1<t_2$, 
		are adjacent to the same $m$ neighbors from the set $[t_1-1]$.     	
	\item[($\fb$)] 
		$\PA(m; n)$ has property $\fb$ if the degree of every vertex  $s\le \tk$ 
		is unique in $\PA(m; n)$, i.e. for no other vertex $s'$ of $\PA(m; n)$ we have 
		$\deg_n(s)=\deg_n(s')$. 
\end{itemize}  

It is easy to see that 
\begin{equation}\label{eq000}
	P(|\Aut(\PA(m; n))|= 1)\ge P(\PA(m; n) \in \fa\cap \fb)\,,
\end{equation}
and so
\begin{equation}\label{eq001}
	P(|\Aut(\PA(m; n))|>1)
    \le P(\PA(m; n) \notin \fa) + P(\PA(m; n) \notin \fb)\,.
\end{equation}
Indeed, let us suppose that $\PA(m; n)$ has both properties $\fa$ and $\fb$, 
and  $\sigma\in \Aut(\PA(m; n))$. Let us assume also that $\sigma$ is not the identity, and let 
$t_1$ be the smallest vertex such that $t_2=\sigma(t_1)\neq t_1$. Note that $\fb$ implies that 
 for all $s\in [k]$ we have $\sigma(s)=s$, so that we must have 
$k<t_1<t_2$.  On the other hand  from $\fa$ it follows that $t_1$ and $t_2=\sigma(t_1)$ 
have  different 
neigbourhoods in the set $[t_1-1]$ (which consists of fixed points of $\sigma$, since $t_1$ was assumed to be the smallest non-fixed point of $\sigma$). This contradiction shows that $\sigma$ is the identity, i.e. $|\Aut(\PA(m; n))|=1$ which proves (\ref{eq000}).

Thus, in order to prove Theorem~\ref{th-main} it is enough to show that both 
probabilities $P(\PA(m; n) \notin \fa)$ and $P(\PA(m; n) \notin \fb)$ tend to $0$ 
polynomially fast as $n\to\infty$, for \emph{some} choice of $k, \tk$ (equivalently, $\Delta, \Delta'$).
We will show that $\Pr(\PA(m; n) \notin \fa)$ tends to $0$ polynomially fast for any choice of $\Delta$,
while $\Pr(\PA(m; n) \notin \fb)$ does so for some sufficiently small choice of $\Delta'$.

Let us study first the property $\fa$. Our task is to estimate from above 
the probability
that there exist vertices $t_1$ and $t_2$ such that $k<t_1<t_2$,  
which select the same $m$ neighbors (which, of course, belong to $[t_1-1]$).
Thus we conclude
\begin{align}
P(\PA(m; n)\notin \fa)
   	&\leq \sum_{k <t_1<t_2} P(t_1, t_2 
	  \text{ choose the same neighbors in } [t_1-1]) \nonumber \\
   	&\leq \sum_{k <t_1<t_2}
         \sum_{1 \leq r_1 \leq r_2 ... \leq r_m < t_1} P(t_1, t_2
			\text{ choose } r_1, ..., r_m)\,.
	\label{eq-rho}
\end{align}
The event in the last expression is an intersection of
dependent events, but it is known (see \cite{hofstaddiameter}, 
Lemma~2.1) that edge events involving connections to distinct 
target vertices (in this case, $r_1, ..., r_m$)
are negatively correlated, so that we can upper bound
by a product of probabilities:
\begin{align*}
	\Pr(t_1, t_2 \text{ choose } r_1, ..., r_m)
    \leq \prod_{\ell=1}^2 \prod_{s=1}^m \Pr(t_\ell \text{ chooses } r_s).
\end{align*}
Applying Lemma~\ref{AdjProbBoundCorollary}, for $k < t_1 < t_2$
we get
\begin{align*}
P(t_1, t_2
			\text{ choose } r_1, ..., r_m)
	&\le Cm	\prod_{\ell=1}^2\prod_{s=1}^m \frac{1}{\sqrt{t_\ell r_s}}.		
\end{align*}
Thus, (\ref{eq-rho}) becomes
\begin{align}
P(\PA(m; n)\notin \fa)
   	&\leq Cm\sum_{k <t_1<t_2}
         \sum_{1 \leq r_1 \leq r_2 ... \leq r_m < t_1} \prod_{\ell=1}^2\prod_{s=1}^m \frac{1}{\sqrt{t_\ell r_s}} \nonumber\\
 	&\leq Cm \sum_{k <t_1<t_2}(t_1t_2)^{-m/2}
         \sum_{1 \leq r_1 \leq r_2 ... \leq r_m < t_1} \prod_{s=1}^m \frac{1}{ r_s}
         \nonumber\\
	&\leq D(m)\sum_{k <t_1}t_1^{-m+1 + \alpha} 
         \nonumber\\  
         &\le C(m) k^{2-m+\alpha}, \label{TL1}    
\end{align}
where $C(m), D(m)$, and $\alpha$ are some positive constants 
(with $\alpha$ arbitrarily small and $C(m)$ and $D(m)$ depending
on $m$).  
The third inequality arises as follows: we have
\begin{align*}
 	\sum_{1 \leq r_1 \leq r_2 ... \leq r_m < t_1} \prod_{s=1}^m \frac{1}{ r_s}   
    &\leq \sum_{1 \leq r_1 \leq r_2 ... \leq r_{m-1} < t_1} \prod_{s=1}^{m-1} \frac{1}{r_s} \sum_{r_{m-1} \leq r_m < t_1} \frac{1}{r_m} \\
    &\leq  \sum_{1 \leq r_1 \leq r_2 ... \leq r_{m-1} < t_1} \prod_{s=1}^{m-1} \frac{1}{r_s} \log \frac{t_1}{r_{m-1}} \\
    &\leq \log t_1 \cdot \sum_{1 \leq r_1 \leq r_2 ... \leq r_{m-1} < t_1} \prod_{s=1}^{m-1} \frac{1}{r_s},
\end{align*}
so that, inductively,  this sum contributes a factor of $(\log t_1)^m \leq (\log t_2)^m$.  We may further upper
bound this by $D(m) t_2^{\alpha}$, for some arbitrarily small $\alpha$ and positive constant $D(m)$.    

Hence
\begin{equation}\label{eqa}
	P(\PA(m; n) \notin \fa)\le 
	n^{\Delta(2.0001-m)}\,,
\end{equation}
which is polynomially decaying since $m \geq 3$.
We remark that this holds for \emph{arbitrary} $\Delta > 0$.

Next we show that, with probability close to $1$, $\Delta'$ may be chosen sufficiently small
so that $\Pr(\PA(m; n) \notin \fb)$ tends to $0$ polynomially fast; i.e., the $\tk=n^{\Delta'}$  
oldest vertices  of $\PA(m; n)$ have unique degrees and so these are fixed points
of every automorphism. 
The key ingredient of our argument is Lemma~\ref{lkey}.

We first define some auxiliary properties: $\fb_1 = \fb_1(\Delta'')$ is the property that
the degrees of all vertices $< \tk'^2$ are pairwise different, and $\fb_2 = \fb_2(\Delta', \Delta'')$
is the property that the degrees of all vertices $< \tk$ are greater than those of all vertices
$> \tk'^2$.  It is easy to see that
\begin{align*}
	\Pr(\PA(m; n) \in \fb) 
    \geq \Pr(\PA(m; n) \in \fb_1, \PA(m; n) \in \fb_2),
\end{align*}
so that
\begin{align*}
	\Pr(\PA(m; n) \notin \fb)
    \leq \Pr(\PA(m; n) \notin \fb_1) + \Pr(\PA(m; n) \notin \fb_2).
\end{align*}

To estimate the probability that $\PA(m; n) \notin \fb_1$, we reason as follows:
from Lemma~\ref{lkey} we know that with probability at least 
$1 - O(n^{-c})$, for some positive constant $c$, 
the degrees of all vertices smaller than $\tk'^2 = n^{2\Delta''}$  
are pairwise different, for any $\Delta''$ small enough to satisfy Lemma~\ref{lkey}.  In other words,
for such a choice $\Delta''$, we have $\Pr(\PA(m; n) \notin \fb_1) = O(n^{-c})$.

Now we show that we can choose $\Delta'$ so as to upper bound $\Pr(\PA(m; n) \notin \fb_2)$.  Intuitively, we
will show that sufficiently early vertices have high degree, while sufficiently late vertices have low degree.
Using Corollary~\ref{EarliestDegreesCorollary}, 
one can deduce that we can choose $\Delta' > 0$ (playing the role of $\delta$ in the corollary) small enough so that, 
with probability at least $1-O(n^{-c})$ (for another positive constant $c > 0$) all vertices $s<\tk = n^{\Delta'}$ have degrees larger than those of
all vertices $t>\tk'^2$ (in particular using the left tail bound to show that vertices $< \tk$ all have high degree and the right tail bound to show that vertices $> \tk'^2$ have low degree \whp). 
Let us be more precise here: we can choose $\Delta$ in Corollary~\ref{EarliestDegreesCorollary} to be some very small
constant (say, $0.00001$).  This ensures the existence of a choice for $\Delta'$ for which
\begin{align}
	P\left( \intersect_{s < \tk = n^{\Delta'}} \deg_n(s) \geq Cn^{(1 - 0.00001)/2} \right) 
    \geq 1 - T^{-c},
\end{align}
for some positive constant $c$.  That is, with probability at least $1 - T^{-c}$, all vertices $< \tk$
have degree larger than $C n^{0.499995}$.  Note that we can bring the exponent arbitrarily close to $0.5$ by finding
a small enough $\Delta'$.  We will require, in fact, that $\Delta'$ is small enough compared to $\Delta''$.

Now, we can use Lemma~\ref{FriezeKaronskiLemma} to show that all vertices $r > \tk'^2 = n^{2\Delta''}$ have low
degree with high probability.  In particular, in the lemma statement, we choose $t := n$ and arbitrary $r > \tk'^2$.
This results, for arbitrary $A > 0$, in the bound
\begin{align}
	\Pr( \union_{r > \tk'^2} \deg_n(r) \geq Ae^m n^{(1 - 2\Delta'')/2}(\log n)^2)
    \leq n \cdot O(n^{-A}) = O(n^{1-A}),
\end{align}
which is polynomially decaying if we set $A$ sufficiently large (say, $A=2$).  
Note that we have used the union bound, followed by Lemma~\ref{FriezeKaronskiLemma}.  This shows that, with probability
at least $1 - n^{-c}$ for some $c > 0$, \emph{all} vertices $r > \tk'^2$ have degree at most $\tilde{O}(n^{(1 - 2\Delta'')/2})$.
Since $\Delta''$ is some specific (small) constant, we may choose $\Delta'$ above so small that
this is asymptotically smaller than the lower bound on the degrees of vertices $< \tk$.  So we have shown
that, for this choice of $\Delta'$, we have $\Pr(\PA(m; n) \notin \fb_2) = O(n^{-c})$.

Consequently, with probability $1-O(n^{-c})$ 
degrees of vertices from $[\tk]$ are unique; i.e.  $\PA(m; n) \notin \fb$.

Finally, Theorem~\ref{th-main} follows directly from 
(\ref{eq001}) and our estimates for
$\Pr( \PA(m; n) \notin \fa)$ and $\Pr( \PA(m; n) \notin \fb)$, provided that we choose $\Delta < \Delta'$.

\begin{remark}
	In the affine preferential attachment model with fixed
    $\delta > -m$, calculations give the following
    alterations of the above derivation.  Define
    $a = \frac{m}{2m+\delta}$.  Then Lemma~2.2 of 
    \cite{hofstaddiameter} implies that the probability
    that vertex $t_{\ell}$ chooses $r_s$ in any given choice
    is at most $\frac{1}{t_{\ell}^{1-a}r_s^{a}}$.
    Following the steps of the derivation above, we eventually
    get $k^{2 - \frac{2m(m+\delta)}{2m+\delta} + \alpha}$ in 
    place of
    $k^{2-m+\alpha}$ in the expression (\ref{TL1}).
    Setting the exponent equal to $0$ and solving for $m$ in
    terms of $\delta$ and $\alpha$ shows that we have
    asymmetry with high probability if 
    \begin{align*}
    	m > \frac{\sqrt{\delta^2 + 4} - \delta + 2}{2}.
    \end{align*}
    This recovers our result when $\delta = 0$, and it shows
    that the asymmetry threshold (as a function of $m$) occurs
    for smaller $m$ (at least $2$) as $\delta$ increases.
\end{remark}

\section{Proof of Theorem~\ref{StructuralEntropyTheorem}}
\label{StructuralEntropyTheoremProofSection}
We now prove the claimed estimate of the structural entropy.

We first show that the contribution of $\E[\log |\Aut(G)|]$ is negligible (in 
particular, $o(n)$).  From Theorem~\ref{th-main} and the fact that 
$|\Aut(G)| \leq n!$, we immediately have
\begin{align*}
	\E[\log|\Aut(G)|]
    \leq n\log n \cdot n^{-\delta} 
    = o(n).
\end{align*}

We now move on to estimate  $H(\sigma | \sigma(G))$, 
which we will show to satisfy
\begin{align}
	n\log n - O(n \log\log n)
    \leq H(\sigma | \sigma(G))   
    \leq n\log n - n + O(\log n).
    \label{ELogGamma}
\end{align}

To go further, we need to define a few sets which will play a role in our
derivation.
We define the \emph{admissible set}
$\Adm(S)$ of a given unlabeled graph $S$ to be the set of all labeled
graphs $g$ with $S(g) = S$ such that $g$ could have been generated
according to the preferential attachment model with given parameters.  
That is, denoting
by $g_t$ the subgraph of $g$ induced by the vertices $1, ..., t$
for each $t \in [n]$, we have that the degree of vertex $t$ in $g_t$
is exactly $m$.  We can similarly define $\Adm(g) = \Adm(S(g))$.  Then,
for a graph $g$, we define $\Gamma(g)$ to be the set of permutations
$\pi$ such that $\pi(g) \in \Adm(g)$.  We will also define, for an
arbitrary set of graphs $B$, 
\begin{align*}
	\Adm_B(g) = \Adm(g) \lintersect B, &&
    \Gamma_B(g) = \{ \pi ~:~ \pi(g) \in \Adm_B(g) \}.
\end{align*}

For a given graph $g$, these sets are related by the following formula 
(the simple proof of this fact is a tweak of that  given in Section~IIB of 
\cite{nodeage}):
\begin{align}
	|\Adm_B(g)| = \frac{|\Gamma_B(g)|}{|\Aut(g)|}.
    \label{AdmGammaAutEqn}
\end{align}

We next need to consider some directed graphs associated with $G$: we
start with $\dag(G)$, which is defined
on the same vertex set as $G$; there is an edge from $u$ to $v < u$ in $\dag(G)$ if
and only if there is an edge between $u$ and $v$ in $G$ (in other words, $\dag(G)$ is simply the graph $G$ 
before we remove edge directions).  Note that, if we ignore self-loops,
$\dag(G)$ is a directed, acyclic graph.

We denote the \emph{unlabeled} version of $\dag(G)$ (i.e., the set of
all labeled directed graphs with the same structure as $\dag(G)$) by
$\udag(G)$.  We will also, at times, abuse notation and write $\udag(G)$
as the set of all labeled, undirected graphs with the same structure
as $\udag(G)$ and with labeling consistent with $\udag(G)$ as a partial
order.

We have the following observations regarding these directed graphs.
\begin{lemma}
	\label{EquiprobabilityLemma}
    Let $G \distrib \PA(m; n)$ for any $m \geq 1$.
    For any two graphs $g_1, g_2$ satisfying $\udag(g_1) = \udag(g_2)$,
    we have
    \begin{align*}
    	P(G=g_1)
        = P(G=g_2).
    \end{align*}
\end{lemma}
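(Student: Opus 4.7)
The plan is to derive an explicit product formula for $P(G=g)$ and to verify directly that every factor of this formula is an invariant of the directed isomorphism class $\udag(g)$, so the equality follows without any further work.

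First I would unfold the preferential attachment dynamics with the chain rule. At each step $t\ge 2$, vertex $t$ makes $m$ i.i.d.\ neighbor choices distributed according to the degrees at time $t-1$; summing over all orderings of these choices that produce the prescribed multiset of out-edges gives
\[
P(G=g) \;=\; \prod_{t=2}^{n} \binom{m}{(k_{t,v})_{v<t}} \prod_{v<t} \left(\frac{\deg_{t-1}(v)}{2m(t-1)}\right)^{k_{t,v}},
\]
where $k_{t,v}$ denotes the multiplicity of the directed edge $t\to v$ in $\dag(g)$.

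Next I would isolate the pieces that are manifestly invariants. The denominator $\prod_{t=2}^n (2m(t-1))^m$ depends only on $n$ and $m$; each multinomial coefficient depends only on the multiset of out-edge multiplicities at vertex $t$, a quantity preserved by every directed-graph isomorphism. The task therefore reduces to showing that the numerator $N(g):=\prod_{t,\,v<t}\deg_{t-1}(v)^{k_{t,v}}$ is also such an invariant.

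I would reorganize $N(g)$ by head vertex as $N(g)=\prod_v F_v(g)$ with $F_v(g):=\prod_{t>v}\deg_{t-1}(v)^{k_{t,v}}$, and then use the identity $\deg_{t-1}(v) = c_v + \sum_{v<w<t} k_{w,v}$ (with $c_v=m$ for $v\ge 2$ and $c_1=2m$) to compute $F_v$ by walking through the in-edges of $v$ in order of arrival: each step increments the accumulated in-degree by exactly the multiplicity just absorbed. Telescoping the resulting product identifies $F_v(g)$ with the rising-factorial-type expression $c_v(c_v+1)\cdots(c_v+d^{-}(v)-1)$, a quantity that depends only on the in-degree $d^{-}(v)$ of $v$ in $\dag(g)$. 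Since in-degree is an invariant of directed-graph isomorphism, $\prod_v F_v$ agrees on $g_1$ and $g_2$ whenever $\udag(g_1)=\udag(g_2)$.

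The main obstacle I anticipate is precisely this telescoping step: one must verify that the per-vertex product, grouped in the order in which in-sources arrive, truly collapses to a symmetric function of the in-neighborhood and is not sensitive to the particular topological ordering chosen to present the DAG. Once that invariance is established, combining the three invariant pieces---denominator, multinomial coefficients, and $\prod_v F_v$---immediately yields $P(G=g_1)=P(G=g_2)$.
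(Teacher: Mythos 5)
Your approach parallels the paper's: both write down an explicit product formula for $P(G=g)$ and argue that every factor is determined by $\udag(g)$. Your unfolding of the dynamics is correct, and your isolation of the denominator and multinomial factors as manifest invariants matches the paper's decomposition in spirit. The gap is exactly where you flagged it, and it is a genuine one. Writing the parents of $v$ in arrival order with in-edge multiplicities $(a_1,\ldots,a_r)$, the per-vertex factor is
\[
F_v(g) \;=\; \prod_{j=1}^{r}\bigl(c_v + a_1 + \cdots + a_{j-1}\bigr)^{a_j},
\]
and this collapses to the rising factorial $c_v(c_v+1)\cdots(c_v+d^-(v)-1)$ only when every $a_j=1$. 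It is not a symmetric function of $(a_1,\ldots,a_r)$ in general: with $r=2$, the tuple $(1,2)$ gives $c_v(c_v+1)^2$ while $(2,1)$ gives $c_v^2(c_v+2)$. A DAG isomorphism can exchange two $\dag$-incomparable parents of $v$ that carry unequal multiplicities, and then $F_v$---hence $P(G=g)$---changes. So the telescoping step cannot be made to ``collapse to a symmetric function of the in-neighborhood'' in the multigraph model.

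It is worth noting that the paper's own proof has the same soft spot: the factor $(m+d_1(v)+\cdots+d_{j-1}(v))^{d_j(v)}$ in its product formula depends on the temporal order of $d_1,\ldots,d_{k(v)}$, which $\udag(g)$ pins down only up to reorderings of incomparable parents; the extra factor $K_g(v)$ counts such reorderings but multiplies rather than averages the order-sensitive terms, so it does not restore invariance. Your argument (and the paper's) does go through verbatim in the simple-graph variant of the model, where all multiplicities are $1$ and $F_v$ really is the rising factorial in $d^-(v)$; to treat the multigraph model one would have to either restrict the lemma to that setting, or show separately that the contribution of vertices with colliding, unequally-multiplied parents is negligible.
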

\begin{proof}
    This can be seen by deriving a formula for the probability assigned
    to a given graph $g$ by the model and noting that it only depends on
    the structure and admissibility (a graph is said to be admissible if
    it is in $\Adm(S)$ for some unlabeled graph $S$).  If $g$ is not 
    admissible, then there exists some
    $t \in [n]$ such that the degree of vertex $t$ at time $t$ is not
    equal to $m$.  This has probability $0$, so $P(G=g) = 0$.
    
    Now, if $g$ is an admissible graph on $n$ vertices, then we can write $P(G=g)$ as
    a product over possible degrees of vertices at time $n$: let
    $\deg_g(v)$ denote the degree of vertex $v$ in $g$.  We consider
    the set $N_g(v)$ of immediate ancestors (i.e., the parents, the vertices that chose
    to connect to $v$) of $v$ in $\dag(g)$,
    denoting the number of edges that they supply to $v$ by 
    $d_1(v), ..., d_{k(v)}(v)$, where $k(v)$ is the number of parents of
    $v$.  
    Then we can write $P(G=g)$ as follows:
 	\begin{align*}
	    \Pr(G = g)
	    = \prod_{j=1}^{mn-1} (2mj)^{-1} \times (m!)^{n-1} \times 
	    \quad \prod_{d > m} 
	    \prod_{v ~:~ \deg_g(v)=d} \frac{d!}{(m-1)!} 
        \prod_{i=1}^{N_g(v)} \frac{1}{d_i!} 
	\end{align*}
   
    This arises from the following considerations: the probability $\Pr(G = g)$ is a ratio,
    the denominator of which is a product, with each vertex choice index $j$ contributing a 
    factor of $2m(j-1)$.  The numerator is computed as follows: the number of ways that
    each vertex's connection choices may have been made contributes a multinomial coefficient,
    resulting in the factorial factors of $(m!)^{n-1}$ and $\frac{1}{d_i!}$.  Finally, the
    fact that vertex $v$ in the $v$ product has degree $d$ (i.e., is chosen $d-m$ times)
    contributes the factor of $\frac{d!}{(m-1)!}$.
    
    Since this formula is only in terms of the degree sequence of the graph and $\udag(g)$,
    two graphs that are admissible and have the same unlabeled DAG 
    must have the same
    probability, which completes the proof.
\end{proof}
Strictly speaking, the above lemma only holds for the version of the model in which
vertex degrees are updated after every choice made by a new vertex.  However, the relative
rarity of multiple edges implies similar results (with additional factors of the form $e^{o(n)}$)
hold for other versions of the model.  This is sufficient for our entropy computations, since
we are generally concerned with expected logarithms of probabilities.

\begin{lemma}
	\label{NumDAGSUpperBound}
    Fix an unlabeled graph $S$ on $n$ nodes with $P(S(G) = S) > 0$ with
    some fixed $m \geq 1$.
    Then the number of distinct unlabeled directed graphs with undirected
    structure $S$ is at most $e^{\Theta(n)}$.
\end{lemma}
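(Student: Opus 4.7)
The plan is to prove the bound by a direct orientation-counting argument. Since $S$ admits a positive-probability representative under $\PA(m;n)$, every labeled graph $g_0$ representing $S$ is $m$-left regular, and hence has exactly $mn$ edges when counted with multiplicity and when the $m$ self-loops of vertex $1$ are counted individually.

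I would next fix one such labeled representative $g_0$ and observe that every unlabeled directed graph $D$ with undirected structure $S$ admits a representative obtained by orienting the edges of $g_0$. Indeed, if $d$ is any labeled representative of $D$, its underlying undirected labeled graph is isomorphic to $g_0$, so composing $d$ with such an isomorphism produces a directed graph on the vertex set of $g_0$ whose underlying undirected graph is literally $g_0$, i.e., an orientation of $g_0$. Hence the map sending an orientation of $g_0$ to the isomorphism class of the resulting directed graph is surjective onto the set we wish to enumerate.

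Finally, since each of the $mn$ edges of $g_0$ admits at most two orientations (self-loops admit only one, but this only helps), the total number of orientations of $g_0$ is at most $2^{mn} = e^{\Theta(n)}$, which gives the claimed upper bound. There is no real obstacle in this proof: the reduction to orientations of a fixed labeled representative is immediate, and a crude $2^{|E|}$ count already matches the exponential rate demanded by the lemma, so no finer enumeration of admissible directed structures (e.g., one accounting for out-degree-$m$ constraints or acyclicity) is required for the stated bound.
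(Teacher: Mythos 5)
Your proof takes essentially the same approach as the paper: both reduce the count to orientations of the $\Theta(n)$ (in fact exactly $mn$) edges, giving the bound $2^{O(n)}$. You spell out more carefully why it suffices to count orientations of a single fixed labeled representative $g_0$ (via the surjection onto isomorphism classes of directed graphs), a step the paper leaves implicit, but the core argument is identical.
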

\begin{proof}
	Observe that the number of edges in $S$ is $\Theta(n)$, as it arises
    with positive probability from $\PA(m; n)$ and $m$ is fixed.
    Then note that each of the $\Theta(n)$ edges may be given one of two
    orientations, resulting in at most $2^{\Theta(n)}$ distinct directed
    graphs, which completes the proof.
\end{proof}

The next lemma shows that $H(\sigma | \sigma(G))$ may be expressed in
terms of the quantities just defined.
\begin{lemma}
	\label{RelatingToGamma}
    Fix $m \geq 1$ and consider $G \sim \PA(m; n)$.
    Let $\sigma \in \Sy_n$ be a uniformly random permutation.  Then 
    \begin{align}
    	H(\sigma | \sigma(G)) = \E[\log|\Gamma_{\udag(G)}(G)|] + O(n).
        \label{HCondSigAlternate}
    \end{align}
\end{lemma}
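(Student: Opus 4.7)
The plan is to reduce the claim to a counting argument via three moves: (i) use the chain rule to pass from $\sigma$ to $G$ modulo an automorphism correction term that Theorem~\ref{th-main} forces to be $o(n)$; (ii) decompose the conditional distribution of $G$ given $\sigma(G)$ according to its unlabeled DAG, using Lemmas~\ref{EquiprobabilityLemma} and~\ref{NumDAGSUpperBound}; and (iii) convert the resulting admissibility count into $|\Gamma_{\udag(G)}(G)|$ via identity~(\ref{AdmGammaAutEqn}).

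For step (i), given $G=h$ and $\sigma(G)=g'$ with $h \cong g'$, the permutation $\sigma$ is uniform over the coset $\{\pi : \pi(h) = g'\}$, which has size $|\Aut(g')|$. The chain rule identity $H(\sigma, G \mid \sigma(G)) = H(G \mid \sigma(G)) + H(\sigma \mid G, \sigma(G))$ together with $H(\sigma \mid G, \sigma(G)) = \E[\log|\Aut(G)|]$ then yields
\begin{align*}
H(\sigma \mid \sigma(G)) = H(G \mid \sigma(G)) + \E[\log|\Aut(G)|].
\end{align*}
Combining Theorem~\ref{th-main} with the crude bound $\log|\Aut(G)| \leq \log n!$ gives $\E[\log|\Aut(G)|] = O(n\log n \cdot n^{-0.004}) = o(n)$, which is absorbed into the final $O(n)$. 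For step (ii), I fix $g'$ and observe that the conditional distribution of $G$ given $\sigma(G)=g'$ is supported on $\Adm(g') = \Adm(S(G)) = \Adm(G)$, with weight proportional to $\Pr(G=h)$; by Lemma~\ref{EquiprobabilityLemma} this weight depends only on $\udag(h)$, so partitioning $\Adm(g')$ by unlabeled DAG yields classes $C_1,\dots,C_k$ on which $G$ is conditionally uniform. Hence $H(G \mid \sigma(G) = g')$ equals the entropy of the DAG-class indicator plus the average $\log|C_i|$, and Lemma~\ref{NumDAGSUpperBound} bounds $k \leq e^{\Theta(n)}$, contributing at most $O(n)$ to the class-indicator entropy.

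For step (iii), the class containing $G$ itself has size $|\Adm(g') \cap \{h : \udag(h) = \udag(G)\}| = |\Adm_{\udag(G)}(G)|$, again using $\Adm(\sigma(G)) = \Adm(G)$. Averaging over $(G,\sigma)$ therefore produces
\begin{align*}
H(G \mid \sigma(G)) = \E[\log|\Adm_{\udag(G)}(G)|] + O(n),
\end{align*}
and identity~(\ref{AdmGammaAutEqn}) together with the $o(n)$ bound on $\E[\log|\Aut(G)|]$ converts this to $\E[\log|\Gamma_{\udag(G)}(G)|] + O(n)$, as required. The only subtlety lies in step (iii): whereas $\Adm$ is invariant under the relabeling $\sigma$ (because admissibility is an isomorphism-invariant property of a graph on the fixed vertex set $[n]$), $\udag$ is \emph{not} invariant under $\sigma$, so the conditional distribution of $G$ given $\sigma(G)$ is genuinely nontrivial; this asymmetric behavior of $\Adm$ versus $\udag$ under relabeling is precisely what makes the DAG-class decomposition the right tool, and is the conceptual heart of the argument.
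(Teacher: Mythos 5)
Your proof follows essentially the same route as the paper: reduce $H(\sigma\mid\sigma(G))$ to $H(G\mid\text{structure})$ via the chain rule and an automorphism correction, decompose by the unlabeled DAG (via Lemmas~\ref{EquiprobabilityLemma} and~\ref{NumDAGSUpperBound}), and finish with identity~(\ref{AdmGammaAutEqn}). Conditioning on $\sigma(G)$ rather than $S(G)$ is equivalent since the extra randomness in $\sigma(G)$ given $S(G)$ is independent of $G$; the paper uses $H(G\mid S(G))$ via Lemma~\ref{StructuralEntropyProposition}, but that is cosmetic.

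The one substantive difference is that you invoke Theorem~\ref{th-main} to bound $\E[\log|\Aut(G)|]=o(n)$, first in step~(i) and again in step~(iii). This is unnecessary, and the dependence on Theorem~\ref{th-main} silently restricts your argument to $m\ge 3$, whereas the lemma is stated (and proved in the paper) for all $m\ge 1$. If you carry the $+\,\E[\log|\Aut(G)|]$ from step~(i) and the $-\,\E[\log|\Aut(G)|]$ coming out of~(\ref{AdmGammaAutEqn}) through to the end \emph{without} bounding them, they cancel identically:
\begin{align*}
H(\sigma\mid\sigma(G)) &= H(G\mid\sigma(G)) + \E[\log|\Aut(G)|]\\
 &= \bigl(\E[\log|\Gamma_{\udag(G)}(G)|] - \E[\log|\Aut(G)|] + O(n)\bigr) + \E[\log|\Aut(G)|]\\
 &= \E[\log|\Gamma_{\udag(G)}(G)|] + O(n),
\end{align*}
with no asymmetry input at all. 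That is why the paper can state the lemma for general $m$ and why the remark notes robustness to model variations: the conclusion never hinges on controlling $|\Aut(G)|$. Your version is correct in the range where Theorem~\ref{th-main} applies, but you should notice the cancellation and drop the appeal to asymmetry to obtain the stated generality.
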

\begin{proof}
	First, we give an alternative representation of $H(\sigma | \sigma(G))$.
    Recall that $H(G | S(G)) = H(\sigma | \sigma(G)) - \E[\log|\Aut(G)|]$.
    The plan is to derive an alternative expression for $H(G | S(G))$ as
    follows: by the chain rule for entropy, we have
    \begin{align*}
    	H(G | S(G))
        &= H(G, \udag(G) | S(G)) \\
        &= H(\udag(G) | S(G)) + H(G | \udag(G)) \\
        &= O(n) + H(G | \udag(G)).
    \end{align*}
    Here, the last equality is a result of Lemma~\ref{NumDAGSUpperBound}.
    Now, by Lemma~\ref{EquiprobabilityLemma}, we have
    \begin{align*}
    	H(G | \udag(G))
        = \E[\log|\Adm_{\udag(G)}(G)|]
        = \E[\log|\Gamma_{\udag(G)}(G)|] - \E[\log|\Aut(G)|] + O(n),
    \end{align*}
    where the second equality is an application of (\ref{AdmGammaAutEqn}).
    This completes the proof.
\end{proof}

\begin{remark}
	Note that Lemma~\ref{RelatingToGamma} is robust to small variations
    in the model.
\end{remark}

Now, to calculate $H(\sigma | \sigma(G))$, it thus remains to estimate
$\E[\log|\Gamma_{\udag(G)}(G)|]$.

We will lower bound
$|\Gamma_{\udag(G)}(G)|$ in terms of the sizes of the \emph{levels} of $\dag(G)$, defined as follows:
$L_1$ consists of the vertices with in-degree $0$ (i.e., with total degree $m$).  Inductively, $L_j$ is the set of
vertices that are destinations \emph{only} of  
edges coming from vertices in $\lunion_{i=1}^{j-1} L_{i}$,
with at least one edge coming from $L_{j-1}$.  
Equivalently, a vertex
$w$ is an element of some level $\geq j+1$ if and only if there 
exist vertices $v_1 < \cdots < v_j$
such that $v_1 > w$ and the path $v_jv_{j-1} \cdots v_1 w$ exists 
in $G$. 

Then it is not too hard to see that any product of permutations that only permute vertices
within levels is a member of $\Gamma_{\udag(G)}(G)$.  Thus, we have, with probability $1$,
\begin{align*}
	|\Gamma_{\udag(G)}(G)|
    \geq \prod_{j \geq 1} |L_j|!.
\end{align*}

To continue, we will prove a proposition (Proposition~\ref{FatDAGProposition} below), 
to the effect that almost all vertices lie
in low levels of $\dag(G)$.  We define $X = X(\epsilon, k)$ to be the number of vertices
$w > \epsilon n$ that are at level $\geq k$ in $\dag(G)$.  In other words, $w$ is counted
in $X$ if there exist
vertices $v_1 < v_2 < \dots < v_k$ for which $w < v_1$ and the path $v_k\cdots v_1 w$ exists
in $\dag(G)$.  

We have the following lemma bounding $\E[X]$:
\begin{lemma}
	\label{EXLemma}
	For any $\epsilon = \epsilon(n) > 0$, there exists $k = k(\epsilon)$ for which 
    \begin{align*}
    	\E[X(\epsilon, k)] \leq \epsilon n.
    \end{align*}
    In particular, we can take any $k$ satisfying
    \begin{align}
    	k \geq 15\frac{m}{\epsilon^2}\log(3/\epsilon).
        \label{KEpsCondition}
    \end{align}
\end{lemma}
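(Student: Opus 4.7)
The plan is a first-moment argument: I will upper-bound $\E[X(\epsilon, k)]$ by summing, over every candidate vertex $w > \epsilon n$ and every ordered $k$-tuple $w < v_1 < \cdots < v_k \leq n$, the probability that the path $v_k v_{k-1} \cdots v_1 w$ is realized in $\dag(G)$. Setting $v_0 := w$, linearity of expectation gives
\[
\E[X(\epsilon, k)] \leq \sum_{w > \epsilon n}\, \sum_{w < v_1 < \cdots < v_k \leq n} P\!\left( \{v_i, v_{i-1}\} \in E(G) \text{ for all } 1 \leq i \leq k \right).
\]
Since the number of index tuples is at most $n^{k+1}/k!$, the task reduces to producing a per-path bound which the factorial $k!$ will dominate for a suitable choice $k = k(\epsilon)$.

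The technical core is factorizing the path probability. A single edge $\{v_i, v_{i-1}\}$ with both endpoints in $(\epsilon n, n]$ has probability at most $5m\log(3/\epsilon)/(\epsilon n)$ by Corollary~\ref{AdjProbBoundCorollary}. To combine $k$ such edges I would condition iteratively on the graph state $G_{v_i - 1}$: given this state, the event that vertex $v_i$ selects $v_{i-1}$ depends only on $v_i$'s $m$ fresh choices and has conditional probability at most $\deg_{v_i - 1}(v_{i-1}) / (2(v_i - 1))$ by a union bound over those choices. Unrolling this tower of conditional expectations and invoking the degree tail estimate of Lemma~\ref{DegreeProbBoundLemma} to absorb the joint moment of the degrees, I expect to recover
\[
P\!\left(\text{path exists}\right) \leq \left( \frac{5m \log(3/\epsilon)}{\epsilon n} \right)^{\!k}.
\]

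Combining these ingredients with Stirling's bound $k! \geq (k/e)^k$ yields
\[
\E[X(\epsilon, k)] \leq n \left( \frac{5me \log(3/\epsilon)}{k \epsilon} \right)^{\!k}.
\]
Plugging in $k := \lceil 15 m \log(3/\epsilon)/\epsilon^2 \rceil$ reduces the base to $e\epsilon/3$, which is strictly less than one for $\epsilon < 3/e$, and a short logarithmic computation then verifies $(e\epsilon/3)^k \leq \epsilon$, delivering $\E[X(\epsilon, k)] \leq \epsilon n$ as desired.

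The main obstacle is precisely the decoupling step that produces the product-form path probability: the $k$ edge events are positively correlated through the shared growth history of the preferential attachment process, so a naive product of marginal bounds is not automatically justified. Handling this properly will require either imposing a global high-probability deterministic cap on every relevant in-degree (via Lemma~\ref{FriezeKaronskiLemma}) and tracking a negligible error term, or controlling the joint moment $\E\bigl[\prod_i \deg_{v_i-1}(v_{i-1})\bigr]$ directly; this bookkeeping is what lets the marginal estimate from Corollary~\ref{AdjProbBoundCorollary} be used $k$ times without an unacceptable loss.
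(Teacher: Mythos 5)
Your first-moment plan — union bound over all $\leq n^{k+1}/k!$ candidate paths $w<v_1<\cdots<v_k$, a per-edge bound from Corollary~\ref{AdjProbBoundCorollary}, and Stirling to make the factorial beat the polynomial — is exactly the paper's argument, and your arithmetic with $k\geq 15m\log(3/\epsilon)/\epsilon^2$ matches.

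The one place where you over-complicate is the decoupling step, which you flag as "the main obstacle" and propose to handle via a global degree cap (Lemma~\ref{FriezeKaronskiLemma}) or a joint moment bound $\E\bigl[\prod_i \deg_{v_i-1}(v_{i-1})\bigr]$. Neither is needed: the factorization is \emph{exact}, not approximate. Write $A_i = \{\,v_i \text{ chooses } v_{i-1}\,\}$. You correctly observe $P(A_i\mid G_{v_i-1}) \leq \deg_{v_i-1}(v_{i-1})/(2(v_i-1))$. Now two facts finish the job. First, $\mathbf{1}_{A_1\cap\cdots\cap A_{i-1}}$ is $G_{v_{i-1}}$-measurable, since each $A_j$ ($j<i$) concerns out-choices of a vertex $\leq v_{i-1}$, all of which are fixed by time $v_{i-1}$. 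Second, the process $t\mapsto \deg_t(v_{i-1})$ for $t\geq v_{i-1}$ is a Markov chain whose transition at each step depends only on the current degree and time (the increment at time $t+1$ is $\mathrm{Binomial}(m,\deg_t(v_{i-1})/(2mt))$ given $G_t$); since it starts from the \emph{deterministic} value $\deg_{v_{i-1}}(v_{i-1})=m$, the whole future trajectory is independent of $G_{v_{i-1}}$. Combining these, the conditional law of $\deg_{v_i-1}(v_{i-1})$ given $\cap_{j<i}A_j$ equals its unconditional law, so $P(A_i\mid\cap_{j<i}A_j)$ obeys the identical bound from Corollary~\ref{AdjProbBoundCorollary}, and the product form follows from the chain rule with no error term and no high-probability event to track. (This decoupling is also implicit in the paper's own terse "apply the corollary $k$ times," which does not spell it out; your instinct to ask why the product is legal is the right one, but the answer is the Markov observation above rather than the heavier machinery you sketch. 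As a minor note, the paper's display has a typo $e$ where $e^k$ is meant; your Stirling estimate is the correct one.)
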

\begin{proof}
	Suppose that $w > \epsilon n$.  We want to upper bound the probability
    that there exist vertices $v_1 < \cdots < v_k$, with $w < v_1$, such
    that there is a path $v_k\cdots v_1 w$ in $G$.  
    Applying 
    Lemma~\ref{AdjProbBoundCorollary}, this probability is upper bounded by
    \begin{align*}
        {n \choose k} \cdot \left( \frac{Cm}{n\epsilon} \right)^{k} 
        \leq \left( \frac{Cme}{\epsilon k} \right)^k
    \end{align*}
    since 
    \begin{align*}
    	{n\choose k}n^{-k}
        \leq \frac{n^k}{k!}n^{-k}
        = \frac{1}{k!}
        \leq \frac{e^k}{k^k}.
    \end{align*}
    The last inequality is by considering the Taylor
    expansion of $e^k$ around $0$ and observing
    that it consists of strictly positive terms,
    including $\frac{k^k}{k!}$.
    
    Now, it is sufficient to show that we can choose $k$ so that this is $\leq \epsilon$. 
    In fact, we can choose 
    $k \geq \frac{3Cm}{\epsilon^2}$.
    This completes the proof.
\end{proof}
Now, we define $Y = Y(k)$ to be the number of vertices $w \geq 1$ that are at level $\geq k$ in 
$\dag(G)$.  The variables $X$ and $Y$ are related by the following inequalities, which hold with
probability $1$:
\begin{align*}
	X \leq Y \leq X + \epsilon n.
\end{align*}
Now, to get a bound on $Y$, we apply Markov's inequality:
\begin{align*}
	\Pr(Y \geq \gamma n)
    \leq \frac{\E[Y]}{\gamma n}
    \leq \frac{\E[X] + \epsilon n}{ \gamma n},
\end{align*}
and provided that (\ref{KEpsCondition}) holds, we can further bound by
\begin{align*}
	\Pr(Y \geq \gamma n) 
    \leq 2\epsilon / \gamma
\end{align*}
using Lemma~\ref{EXLemma}.
Then, provided that we choose $\gamma = \sqrt{2\epsilon}$, we have shown
that
\begin{align*}
	\Pr(Y \geq \gamma n) 
    \leq \gamma.
\end{align*}

This is summarized in the following proposition.
\begin{proposition}
	For any $\gamma = \gamma(n) > 0$, there exists $\ell = \ell(\gamma)$ for which
    the number of vertices that are not in the first $\ell$ layers of $\dag(G)$ is at
    most $\gamma n$, with high probability.
    In particular, we can take $\ell \geq 12 Cm / \gamma^4$.  
    \label{FatDAGProposition}
\end{proposition}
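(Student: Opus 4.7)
The plan is to deduce the proposition directly from Lemma~\ref{EXLemma} and Markov's inequality, essentially formalizing the computation sketched in the paragraphs immediately preceding the statement. Recall that $Y(k)$ counts vertices at level $\geq k$ in $\dag(G)$, while $X(\epsilon,k)$ is the same count restricted to vertices of label $> \epsilon n$; the former is the quantity we must bound, and the latter is the quantity Lemma~\ref{EXLemma} gives us control of.

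First I would record the elementary deterministic relation
\begin{align*}
Y(k) \leq X(\epsilon,k) + \epsilon n,
\end{align*}
which holds because any vertex counted by $Y(k)$ either has label $w > \epsilon n$ (in which case it is counted by $X(\epsilon,k)$) or has label $w \leq \epsilon n$ (of which there are at most $\epsilon n$). Taking expectations and invoking Lemma~\ref{EXLemma} for any $k$ satisfying the condition~(\ref{KEpsCondition}) gives $\E[Y(k)] \leq 2\epsilon n$.

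Next I would apply Markov's inequality at threshold $\delta n$:
\begin{align*}
\Pr\bigl[\,Y(k) \geq \delta n\,\bigr] \leq \frac{\E[Y(k)]}{\delta n} \leq \frac{2\epsilon}{\delta}.
\end{align*}
Then I would tune the two free parameters against each other. Choosing $\epsilon = \delta^2/2$ equalizes the bound to $\delta$, which tends to $0$ provided we let $\delta = \delta(n) \to 0$; this is precisely the ``with high probability'' statement we want. Substituting this choice of $\epsilon$ into condition~(\ref{KEpsCondition}) yields the explicit threshold $\ell$ in terms of $\delta$ (a constant multiple of $m\delta^{-4}\log(1/\delta)$, matching the form claimed in the proposition up to absolute constants).

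I do not anticipate any serious obstacle at this stage. The real technical work, namely bounding the probability that a descending chain of length $k$ terminates at a vertex of label $> \epsilon n$, has already been absorbed into Lemma~\ref{EXLemma} via Corollary~\ref{AdjProbBoundCorollary} together with the ordered $k$-tuple count $\binom{n}{k}$. What remains here is a single Markov step plus a parameter calibration, which is entirely routine.
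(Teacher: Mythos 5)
Your proposal is correct and follows essentially the same route as the paper: the deterministic bound $Y \leq X + \epsilon n$, an application of Markov's inequality, and the calibration $\delta = \sqrt{2\epsilon}$ (equivalently $\epsilon = \delta^2/2$) fed back into condition~(\ref{KEpsCondition}) to obtain the explicit threshold for $\ell$. Your remark that the ``with high probability'' conclusion requires $\delta(n) \to 0$ is a fair observation about how the statement is meant to be read, and is consistent with how it is subsequently applied with $\delta = \Theta(1/\log n)$.
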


We have a final important result on the structure of $\dag(G)$.
The proof is given in the Appendix. 
\begin{theorem}[Height of $\dag(G)$]
	\label{DAGHeightUBTheorem}
	Consider $G_n \sim \PA(m; n)$ for fixed $m \geq 1$.  
    Then, with probability at least $1-o(n^{-1})$, the 
    height of $\dag(G_n)$ is at most $Cm\log n$, for some
    absolute positive constant $C$.
\end{theorem}

We now use Proposition~\ref{FatDAGProposition} to finish our lower bound on $\E[\log|\Gamma_{\udag(G)}(G)|]$.
Fix $\epsilon = \frac{1}{\log^2 n}$, so that $\gamma = \sqrt{2\epsilon} = \Theta(1/\log n)$,
and choose  $\ell = 12 Cm / \gamma^4$.  
Then, defining $A$ to be the event that the number of vertices in layers $> \ell$ is
at most $\gamma n = \Theta(n/\log n)$, 
we have
\begin{align*} 
	\E[\log |\Gamma_{\udag(G)}(G)|]
    \geq \E[\log |\Gamma_{\udag(G)}(G)| ~\big|~ A] (1 - \gamma).
\end{align*}
Among the $\ell$ layers, there are at most $\ell-1$ that satisfy, say, $|L_i| < \log\log n$,
since $\sum_{i=1}^{\ell} |L_i| \geq (1-\gamma)n$.  So we have the following:
\begin{align*}
	\sum_{i=1}^{\ell} \log(|L_i|!)
    = O(\ell \log\log n \log\log\log n) + \sum_{i \in B}(|L_i|\log|L_i| + O(|L_i|)),
\end{align*}
where $B = \{ i \leq \ell ~:~ |L_i| \geq \log\log n \}$, and we used Stirling's formula to
estimate the terms $i \in B$.  Note, importantly,
that the $O(|L_i|)$ term is uniform in $i$.

The sum $\sum_{i\in B} O(|L_i|) = O((1-\gamma)n) = O(n)$, so it remains to estimate
\begin{align*}
	\sum_{i \in B} |L_i|\log|L_i|.
\end{align*}
Let $N = \sum_{i \in B} |L_i|$.  Then, multiplying and dividing each instance of $|L_i|$ by
$N$ in the above expression, it becomes
\begin{align*}
	\sum_{i \in B} |L_i|\log|L_i|
    = N \sum_{i \in B} \frac{|L_i|}{N}\log \frac{|L_i|}{N} + N\sum_{i \in B} \frac{|L_i|}{N}\log N.
\end{align*}
The first sum is simply $-NH(X)$, where $X$ is a random variable distributed according to the empirical
distribution of the vertices on the levels $i \in B$.  Since $|B| \leq \ell$, we have that
$|-NH(X)| \leq N\log\ell$.  Thus, the first term in the above expression is $O(N\log\ell) = O(n\log\log n)$.
Meanwhile, the second term is $N\log N \sum_{i \in B} \frac{|L_i|}{N} = N\log N = n\log n - O(n\log\log n)$.
Thus, in total, we have shown
\begin{align*}
	\E[\log |\Gamma_{\udag(G)}(G)|]
    \geq n \log n - O(n\log\log n).
\end{align*}
Compare this with the trivial upper bound on $\E[\log|\Gamma_{\udag(G)}(G)|]$:
\begin{align*}
	\E[\log|\Gamma_{\udag(G)}(G)|]
    \leq \log n!
    = n\log n - n + O(\log n).
\end{align*}
This implies that we have recovered the first term of $\E[\log |\Gamma_{\udag(G)}(G)|]$, but there is a gap in our lower 
and upper bounds on the second term.  This completes the proof of (\ref{ELogGamma}).  Combining this with our estimates
of $\E[\log|\Aut(G)|]$ and of $H(G)$ yields the claimed structural entropy estimate, which concludes the proof of Theorem~\ref{StructuralEntropyTheorem}.

\section{Conclusion and Further Work}

In this paper, we just proved
that a version of the standard preferential attachment graph is asymmetric if every node
adds {\it more} than two edges. It is easy to extend this statement to the case
when the attachment is uniform and a mixture of uniform and preferential:
e.g., for a fixed $\beta \in [0, 1]$, the probability that a connection choice goes to node $w$ at time $n+1$ is
	$$
		P(v_i=w | G_{n}, v_1, ..., v_{i-1}) =\beta \frac{\deg_n(w)}{2m n} + (1-\beta) \frac{1}{n}.
	$$    
Another, possibly more practical, model was introduced by Cooper and Frieze \cite{cf2003}
in which essentially the number of edges added follows a given distribution.
We believe our methodology can handle this case, too.

However, consider a model in which the weight of a vertex when $m$ new edges are
generated is proportional to the degree raised to some power $\alpha$. In this
paper we considered $\alpha=1$. We are confident 
our approach could be adopted to work for all $\alpha >0$ to find
the threshold $m_\alpha$ for the asymmetry which, clearly, will grow with $\alpha$.
However, in the case $\alpha\neq 1$ the problem becomes much harder  since, for instance, 
the probability that $t$ chooses vertex $s$ as its neighbor  depends not 
only on the degree $\deg_t(s)$ but on the whole degree sequence at the time $t$ (though there has been some work on the asymptotic degree distribution and other structural properties
in the case of $\alpha > 1$ \cite{nonlinearm1,oliveira2005}).
Nonetheless, these difficulties could be overcome by modern combinatorial methods and we 
plan to deal with this model in the nearest future.

\bibliographystyle{plain}
\bibliography{asymmetry-bib.bib}

\section*{Appendix: Further Analysis of $\dag(G)$}
\label{HeightAppendix}

\begin{proof}[Proof of Theorem~\ref{DAGHeightUBTheorem}]
	Let us start with the following, surprising at first sight, observation.
	 
	\begin{fact}\label{f1}
	    Let $w < v$.  Then the degree $\deg_v(w)$ as well as the probability that $v$ is adjacent to $w$ does not depend on the structure of the graph induced by the first $w$ vertices.  In fact, the degree of a particular vertex is a Markov chain in
        time. \qed
	\end{fact}

	Let $p_m(n,k)$ denote the probability that $\dag(G_n)$ 
    contains a path of length $k$. 
    From Fact~\ref{f1} and 
    Lemma~\ref{AdjProbBoundCorollary}, it follows that 
	\begin{align}
		p_m(n,k)&\le \sum_{v_0<v_1<\dots<v_k} \prod_{i=1}^k\Pr(v_{i-1}\to v_i)
		\le \sum_{v_0<v_1<\dots<v_k} \prod_{i=1}^k\frac{5m\log (3v_i/v_{i-1})}{\sqrt{v_{i-1}v_i}}\nonumber\\
		&\le \sqrt{n} \sum_{v_0=1}^{n-k}\frac{1}{\sqrt{v_0}}\prod_{i=1}^k
		\sum_{v_i=v_{i-1}+1}^{n-k-i} \frac{5m\log (3v_i/v_{i-1})}{{v_i}}\,.\label{eq1}
	\end{align}
	In order to estimate the above sum we split all the vertices $v_1,\dots,v_k$ of the path $P$ into several classes. Namely we say that a vertex $v_i$ is of type $t$ in $P$ 
	if $t$ is the smallest natural number such that $v_i/v_{i-1}\le (1+a)^t$, where 
	$a$ is a small constant to be chosen later, i.e. $t=\lceil \log(v_i/v_{i-1})/\log (1+a)\rceil$. 
	Then, given $v_{i-1}$, the contribution of terms related to $v_i$ can be estimated from above by 
	\begin{equation}\label{eq2}
	\sum_{v_i=v_{i-1}(1+a)^{t-1}}^{v_{i-1}(1+a)^t}\frac{5m\log (3v_i/v_{i-1})}{{v_i}}\le 
	5m\log[(1+a)]\log[3(1+a)^t]\le \alpha t\,,
	\end{equation} 
	where, to simplify notation, we put $\alpha=5m\log(1+a)\log(3(1+a))$. Let 
	$s_t$ denote the number of vertices of type $t$ in $P$. Note that  
	$\prod_{t\ge 2}\big[(1+a)^{t-1}\big]^{s_t}\le n\,$ 
	and so 
	\begin{equation}\label{eq3}
	\sum_{t\ge 2} ts_t\le 2\sum_{t\ge 2} (t-1){s_t}\le \frac{2\log n}{\log(1+a)}\,.
	\end{equation}
	Let us set $J={2\log n}/{\log(1+a)}$. 
	Thus, we arrive at the following estimate for $p_m(n,k)$
	\begin{align*}
		p_m(n,k)&\le \sqrt{n} \sum_{v_0=1}^{n-k}\frac{1}{\sqrt{v_0}}\binom{k}{s_1} \alpha^{s_1}
		\sum_{\sum_t s_tt\le J}
        { {k-s_1}\choose {s_2, s_3 , ..., s_k} }
        \prod_{t\ge 2}^k (\alpha t)^{s_t}\\
		&\le 3n\binom{k}{s_1} \alpha^{s_1}
		\sum_{\sum_t s_tt\le J}
        { {k-s_1} \choose {s_2, s_3, ..., s_k}}
		 \exp\big(\sum_{t\ge 2}s_t\log(\alpha t)\big)\\
		&\le 3{n}\binom{k}{s_1} \alpha^{s_1}2^{2J}
		\max_{\sum_t s_tt\le J}
		 \exp\Big(\sum_{t\ge 2}s_t\log\Big(\frac{e\alpha t(k-s_1)}{s_t}\Big)\Big)\,.
	\end{align*}
	In order to estimate the expression
	$ \sigma(J,S)=\max_{\sum_t s_tt\le J}
	\exp\Big(\sum_{t\ge 2}s_t\log\Big(\frac{e\alpha t S}{s_t}\Big)\Big)
	$ %
	where ~~~ $S=\sum_{t\ge 2}s_t$, we split the set of all $t$'s into two parts.
	Thus, let 
	$T_1=\{t: \log (e\alpha tS/s_t)\le t\}$ and 
    $T_2=\{2,3,\dots,k\}\setminus T_1\,.$
	Then, clearly, 
	\begin{equation*}\label{eq5}
	\max_{\sum_t s_tt\le J}
	\exp\Big(\sum_{t\in T_1}s_t\log\Big(\frac{e\alpha t S}{s_t}\Big)\Big)
	\le \max_{\sum_t s_tt\le J}
	\exp\Big(\sum_{t\in T_1}s_t t\Big)\le \exp(J)\,.
	\end{equation*}
	Observe that for every $t\in T_2$ we have 
	$\log (eS\alpha t/s_t)\ge t$ and so $s_t\le e\alpha t  e^{-t}S$. It is easy to check that 
	then 
	$s_t\log\Big(\frac{e\alpha t S}{s_t}\Big)  \le 6\cdot 2^{-t}  S\,,    $
	so
	$$\max_{\sum_t s_tt\le J}
	\exp\Big(\sum_{t\in T_2}s_t\log\Big(\frac{e\alpha t S}{s_t}\Big)\Big)
	\le \max_{\sum_t s_tt\le J}
	\exp\Big(6S\sum_{t\in T_2}2^{-t}\Big)\le \exp(3S)\le \exp(3J)\,.$$
	Thus, 
	$\sigma(J,S)\le \exp(4J)\,,$
	and, since $s_1= k-S\ge k-J$, 
	\begin{align*}
	p_m(n,k)&\le  3{n}\binom{k}{s_1} \alpha^{s_1} 2^{2J}\sigma(J,k-s_1)
	\le 3 n 2^k \alpha^{k-J} \exp(6J)\\
	&\le 3\exp(\log n +k+  (k-J)\log \alpha +6J)\,.
	\end{align*}
	Since for $0<a<1$ we have 
	$a/2<\log (1+a)<a$, if we set $a=1/(310m)$, then $\alpha<1/61$ and $\log\alpha<-4$.
	Now let us recall that  $J={2\log n}/{\log(1+a)}$ and $k=5000m \log n>4J$.  Thus, 
	\begin{align*}
	p_m(n,k)&\le  3\exp(\log n +k+  (k-J)\log \alpha +6J)\\
	&\le  3\exp(\log n +k-  3 k+3k/2)=\exp(\log n - k/2)=o(n^{-1})\,.
	\end{align*}
\end{proof}
It is simple to show that with high probability
the height is also lower bounded by $\Omega(\log n)$.  Thus, the height is $\Theta(\log n)$ with high probability.


 
\end{document}